\newtheorem{theorem}{Theorem}
\newtheorem{definition}[theorem]{Definition}
\newtheorem{lem}[theorem]{Lemma}
\newtheorem{cor}[theorem]{Corollary}
\newtheorem{prop}[theorem]{Proposition}
\newtheorem{remark}[theorem]{Remark}
\numberwithin{theorem}{section}
\numberwithin{equation}{section}
\newcommand{\pl}{\partial}
\newcommand{\na}{ \nabla}
\newcommand{\lt}{\left}
\newcommand{\rt}{\right}
\title{Quasi-local energy with respect to a static spacetime}
\author{Po-Ning Chen, Mu-Tao Wang, Ye-Kai Wang, and Shing-Tung Yau}
\date{}
\thanks{P.-N. Chen is supported by NSF grant DMS-1308164, M.-T. Wang is supported by NSF grant DMS-1405152,  and S.-T. Yau is supported by NSF grants  PHY-0714648 and DMS-1308244.  } 
\begin{document}

\begin{abstract}
This article considers the quasi-local energy in reference to a general static spacetime. We follow the approach developed by the authors in \cite{Wang-Yau1,Wang-Yau2,Chen-Wang-Yau3,Chen-Wang-Yau} and define the quasi-local energy as a difference of surface Hamiltonians, which are derived from the Einstein-Hilbert action.  The new quasi-local energy provides an effective gauge independent measurement of how far a spacetime deviates away from the reference static spacetime on a finitely extended region. 
\end{abstract}
\maketitle
\section{Introduction}
Due to the lack of energy density by Einstein's equivalence principle, the definition of gravitational energy has been a challenging problem. One can at best hope to define energy as a boundary integral instead of a bulk integral. The application of the Hamilton-Jacobi theory \cite{by2, hh} to the Einstein-Hilbert action gives an expression that depends on a reference term. For an isolated system with suitable decay at infinity, it is possible to choose  an asymptotically flat coordinate system to anchor the reference term, and this leads to the celebrated definitions of the ADM energy \cite{Arnowitt-Deser-Misner}, and the positive energy theorems of Schoen-Yau \cite{SY1}, Witten \cite{W}, etc.  However, for a finitely extended system, the choice of a reference had remained subtle and ambiguous until \cite{Wang-Yau1,Wang-Yau2} in which isometric embeddings into the Minkowski spacetime were applied to give a well-defined definition of quasi-local energy. The idea is to utilize the surface Hamiltonian \cite{by2, hh} from the Einstein-Hilbert action to pick up an optimal one among all such isometric embeddings. The resulting definition of energy and conserved quantities have had several remarkable applications \cite{Chen-Wang, Chen-Wang-Yau3, Chen-Wang-Yau2} since then. This approach was subsequently  generalized to define quasi-local energy with respect to de Sitter/Anti-de Sitter reference recently \cite{Chen-Wang-Yau}. In this paper, we further generalize to allow the reference spacetime to be a general static spacetime. Such an energy is not expected to have a straightforward positivity property as the Minkowski reference case. The principal application seems to be to a perturbative configuration. For example, although the black hole uniqueness theorem \cite{Israel,Bunting-Massod} establishes the Schwarzschild solution as the unique asymptotically flat static vacuum spacetime, a black hole in reality will be a perturbation.  The quasi-local energy provides an effective gauge independent measurement of how far such a perturbation deviates away from the exact Schwarzschild solution. 

Throughout this article, a spacetime is a time-oriented Lorentz 4-manifold. We impose the static condition on the reference spacetime. 

\begin{definition}\label{static} A static spacetime is a time-oriented Lorentz 4-manifold (with possibility nonempty smooth boundary) such that there exists a coordinate system 
$(t, x^1, x^2, x^3)$ (static chart) under which the Lorentz metric takes the form
\begin{equation}\label{metric_form}
\check{g}= -V^2(x^1, x^2, x^3) dt^2 +g_{ij} (x^1, x^2, x^3) dx^i dx^j,
\end{equation}
where $V>0$ on the interior and $V=0$ on the boundary.
 
\end{definition}
Each time slice, i.e. the hypersurface defined by $t=c$ for a constant $c$, is a smooth Riemannian 3-manifold with possibly nonempty smooth boundary $\partial M$, such that $V>0$ in the interior of $M$ and $V=0$ on $\partial M$. Denote the covariant derivative of the metric $\check{g}$ by D and that of the metric $g$ by $\bar \na$.

In the following, we recall the null convergence condition:
\begin{definition}\label{null-convergence} 
A spacetime with Lorentz metric $\check{g}$ satisfies the null convergence condition if 
\begin{equation}\label{null_conv} Ric_{\check{g}}(L,L) \ge 0  \end{equation}
for any null vector $L$, where $Ric_{\check{g}}$ is the Ricci curvature of $\check{g}$. 
\end{definition}
Recall that $L$ is  a null vector if $\check{g}(L,L)=0$. By \cite{WWZ}, a static spacetime satisfies the null convergence condition \eqref{null_conv} if and only if
\begin{equation}\label{null}  \bar \Delta V g - \bar \na^2 V + V Ric \ge 0, \end{equation} on each time slice, where $Ric$ is the Ricci curvature of the metric $g$.

In particular, static vacuum spacetimes satisfy the null convergence condition. These spacetimes have been studied extensively. We summarize some basic properties as follows.  
The static metric $\check{g}$ satisfies the vacuum Einstein equation with the cosmological constant $\Lambda$ if 
\begin{equation}\label{vacuum-static}
\left\{
    \begin{array}[]{rlll}
-\Lambda V g - \bar \na^2 V + V Ric &=& 0,\\
        \bar \Delta V + \Lambda V &=& 0. \\
    \end{array}
    \right.
\end{equation}
From  \eqref{vacuum-static}, it follows that (see \cite[Proposition 2.3]{Corvino}, \cite[Lemma 2.1]{Miao-Tam} for example)
\begin{enumerate}
\item{The scalar curvature of $g_{ij}$ is constant,}
\item{0 is a regular value of $V$ and $\{ V=0\}$ is totally geodesic,}
\item{$|\bar \na V|$ is a positive constant on each component of $\{ V=0\}$.}
\end{enumerate}

From here on, we pick a static spacetime $\mathfrak{S}$ as in Definition \ref{static} and refer to it as the reference spacetime.  Let $\mathring{\mathfrak{S}}$ denote the interior and $\partial\mathfrak{S}$ denote the boundary of $\mathfrak{S}$, respectively. In addition, we refer to the hypersurface  $t=c$ as a static slice and the function $V$ as the static potential.

The results in the paper are summarized as follows. The definition of quasi-local energy is given in \S 2.2. For a surface in the reference spacetime, it is proved that the identity isometric embedding not only has energy zero by definition, but also is a critical point of the quasi-local energy (Theorem \ref{thm_own_critical}). The first variation of the quasi-local energy, which characterizes an optimal isometric embedding,  is derived in Theorem \ref{thm_first_variation_graph}. At last, it is shown that the identity isometric embedding of a surface in the static slice is locally energy-minimizing (Theorem \ref{minimize_self_1}).

\section{Quasi-local energy with respect to a static spacetime reference}
In this section, we define a new quasi-local energy allowing the reference spacetime to be a general static spacetime,  following the construction in \cite{Chen-Wang-Yau}.

\subsection{Geometry of surfaces in a static spacetime} Let $\mathfrak{S}$ be a reference spacetime.
Consider a surface $\Sigma$ in  $\mathring{\mathfrak{S}}$ defined by an embedding $X$ of an abstract surface $\Sigma_0$. In the static chart, we denote the components of $X$ by $(\tau,X^1,X^2,X^3)$. Let $\sigma$ be the induced metric on $\Sigma$,  $H_0$ be the mean curvature vector of $\Sigma$, and $J_0$ be the reflection of $H_0$ through the incoming light cone in the normal bundle of $\Sigma$. Denote the covariant derivative with respect to the induced metric $\sigma$ by $\nabla$. 

Given an orthonormal frame $\{ e_3,e_4\}$ of the normal bundle of $\Sigma$ in $\mathring{\mathfrak{S}}$  where $e_3$ is spacelike and $e_4$ is future timelike, we define the connection one-form associated to the frame 
\begin{equation}\label{alpha_3}  \alpha_{e_3} (\cdot)=\langle D_{(\cdot)}  e_3, e_4 \rangle.\end{equation}
We assume the mean curvature vector of $\Sigma$ is spacelike and consider the following connection one-form of $\Sigma$ with respect to the mean curvature vector:
\begin{equation}\label{alpha_h} \alpha_{H_0}(\cdot )=\langle D_{(\cdot)}   \frac{J_0}{|H_0|}, \frac{H_0}{|H_0|}   \rangle.  \end{equation}

Let $\widehat \Sigma$ be the surface in the static slice $t=0$ given by $\widehat X=(0,X^1,X^2,X^3)$ which is assumed to be an embedding. The surfaces $\Sigma$ and $\widehat\Sigma$ are canonically diffeomorphic through the above identification. Let $\hat \sigma$ be the induced metric on $\widehat \Sigma$, and $\widehat H$  and $\hat h_{ab}$ be the mean curvature and second fundamental form of $\widehat \Sigma$ in the static slice, respectively. Denote the covariant derivative with respect to the metric $\hat \sigma$ by $\hat \nabla$. 

Let $C$ be the image of $\Sigma$ under the one-parameter family $\phi_t$. The intersection of $C$ with the static slice $t=0$ is $\widehat \Sigma$. 
Let $\breve e_3$ be the outward unit normal of $\widehat \Sigma$ in the static slice $t=0$. Consider the pushforward of $\breve e_3$ by the one-parameter family  $\phi_t$, which is denoted by $\breve e_3$ again. Let $\breve e_4$ be the future directed unit normal of $\Sigma$ normal to $\breve e_3$ and extend it along $C$ in the same manner. It is easy to see that Lemma 2.1, Proposition 2.2, Proposition 2.3 and Proposition 3.2 of \cite{Chen-Wang-Yau} hold for a general static spacetimes. We state them here for later reference.

\begin{lem}
Along $C$, we have
\begin{align}\label{decompose_4} \breve e_4  = & \sqrt{1+V^2| \nabla \tau|^2} \left(  \frac{\frac {\partial}{\partial t}}{V} +  V \hat \nabla \tau\right)\\
\label{decompose_t}  \frac {\partial}{\partial t} =& V \sqrt{1+V^2| \nabla \tau|^2} \breve e_4 - V^2 \nabla \tau.
\end{align}
\end{lem}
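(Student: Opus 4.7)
The plan is to derive \eqref{decompose_t} first via an orthogonal decomposition of $\frac{\partial}{\partial t}$ along the normal bundle of $\Sigma$, and then to obtain \eqref{decompose_4} by algebraic inversion. The starting observation is that the static Killing field $\frac{\partial}{\partial t}$ is orthogonal to every static slice, while $\breve e_3$ is constructed as the $\phi_t$-pushforward of a vector tangent to the slice $t=0$; since $\phi_t$ is an isometry carrying static slices to static slices, $\breve e_3$ remains tangent to a static slice at every point of $C$, and therefore $\langle \frac{\partial}{\partial t}, \breve e_3\rangle = 0$. In the decomposition of $\frac{\partial}{\partial t}$ relative to the frame $\{X_*(\partial_a),\breve e_3,\breve e_4\}$ the $\breve e_3$-component is then absent, so one may write $\frac{\partial}{\partial t} = T + \beta\,\breve e_4$ with $T\in T\Sigma$. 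I would identify $T$ by pairing against $X_*(\partial_a) = (\partial_a \tau)\frac{\partial}{\partial t} + \widehat X_*(\partial_a)$: because $\frac{\partial}{\partial t}$ is orthogonal to spatial vectors, $\langle \frac{\partial}{\partial t}, X_*(\partial_a)\rangle = -V^2 \partial_a \tau$, hence $T = -V^2\nabla\tau$. The coefficient $\beta$ is then determined from $\langle \frac{\partial}{\partial t},\frac{\partial}{\partial t}\rangle = -V^2$, which gives $\beta^2 = V^2(1+V^2|\nabla\tau|^2)$; the positive root is forced by requiring both sides to be future-directed, and this establishes \eqref{decompose_t}.

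For \eqref{decompose_4}, the idea is to solve \eqref{decompose_t} for $\breve e_4$ and recast the right-hand side in terms of $\hat\nabla\tau$. The induced metric on $\Sigma$ is the rank-one perturbation $\sigma_{ab} = \hat\sigma_{ab} - V^2\partial_a\tau\,\partial_b\tau$, so the Sherman--Morrison formula yields $\sigma^{ab} = \hat\sigma^{ab} + \frac{V^2(\hat\nabla\tau)^a(\hat\nabla\tau)^b}{1-V^2|\hat\nabla\tau|^2}$, which after a short calculation produces the two identities
\[
1+V^2|\nabla\tau|^2 = \frac{1}{1-V^2|\hat\nabla\tau|^2}, \qquad V^2\nabla\tau + \frac{\partial}{\partial t} = \frac{1}{1-V^2|\hat\nabla\tau|^2}\left(\frac{\partial}{\partial t} + V^2 \hat\nabla\tau\right).
\]
Substituting these into the expression for $\breve e_4$ coming from \eqref{decompose_t} reproduces exactly \eqref{decompose_4}. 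A cleaner alternative, avoiding the inversion step, is to verify \eqref{decompose_4} directly: the right-hand side is orthogonal to $\breve e_3$ (each of $\frac{\partial}{\partial t}$ and $\hat\nabla\tau$ is), its pairing with $X_*(\partial_a)$ vanishes by the computation $\langle \frac{1}{V}\frac{\partial}{\partial t} + V\hat\nabla\tau,\, (\partial_a\tau)\frac{\partial}{\partial t} + \widehat X_*(\partial_a)\rangle = -V\partial_a\tau + V\partial_a\tau = 0$, and its squared norm equals $-1$ in view of the first identity above.

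The main obstacle is essentially bookkeeping: the two formulas are stated in terms of the different gradients $\nabla\tau$ on $(\Sigma,\sigma)$ and $\hat\nabla\tau$ on $(\widehat\Sigma,\hat\sigma)$, so seeing that they are mutually consistent forces one to push through the algebraic identity between the two induced metrics. The genuine geometric input is minimal and is exhausted by the fact that $\frac{\partial}{\partial t}$ is simultaneously Killing and orthogonal to each static slice, which is precisely what makes $\breve e_3\perp \frac{\partial}{\partial t}$ hold along $C$.
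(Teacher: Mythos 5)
Your argument is correct: the orthogonality $\langle \frac{\partial}{\partial t},\breve e_3\rangle=0$ along $C$, the tangential component $-V^2\nabla\tau$ read off from $\langle \frac{\partial}{\partial t},X_*(\partial_a)\rangle=-V^2\tau_a$, the normalization $\beta=V\sqrt{1+V^2|\nabla\tau|^2}$, and the conversion between $\nabla\tau$ and $\hat\nabla\tau$ via $\sigma_{ab}=\hat\sigma_{ab}-V^2\tau_a\tau_b$ (so that $1+V^2|\nabla\tau|^2=(1-V^2|\hat\nabla\tau|^2)^{-1}$) all check out, and your direct verification of \eqref{decompose_4} is a clean shortcut. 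The paper itself gives no proof here --- it asserts that Lemma 2.1 of \cite{Chen-Wang-Yau} carries over to general static spacetimes --- and your computation is exactly the expected verification of that claim.
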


\begin{prop}\label{proposition_mean_curvature_projection}
Along $C$,
\begin{equation}\label{relation_mean_curvature}
\widehat H = -\langle H_0, \breve e_3 \rangle - \frac{V}{ \sqrt{1+V^2| \nabla \tau|^2}} \alpha_{\breve e_3}(\nabla \tau).
\end{equation}
\end{prop}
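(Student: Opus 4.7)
My plan is to prove the identity by a direct computation on the cylinder $C$, using the canonical identification $\Sigma \leftrightarrow \widehat\Sigma$ induced by the $\phi_t$-flow to compare both mean curvatures in a single ambient frame. In local coordinates $u^a$ on the abstract surface I split tangent vectors to $\Sigma$ as $e_a = \tau_a \partial_t + \hat e_a$, where $\hat e_a = \partial_a X^i \partial_i$ is the $\phi_t$-pushforward of the coordinate tangent of $\widehat \Sigma$ and is spatial. Since $\breve e_3$ is spatial and $g$-orthogonal to the $\hat e_a$, a direct check shows it is spacetime-orthogonal to $\Sigma$ along $C$, so the two quantities to compare are
\[ -\langle H_0, \breve e_3\rangle = \sigma^{ab}\langle D_{e_a}\breve e_3, e_b\rangle \qquad \text{and} \qquad \widehat H = \hat\sigma^{ab}\langle \bar\nabla_{\hat e_a}\breve e_3, \hat e_b\rangle. \]

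The core computation is to expand $\langle D_{e_a}\breve e_3, e_b\rangle$. Two static-geometry inputs suffice. First, a direct Christoffel calculation in the chart \eqref{metric_form} gives $D_X \partial_t = X(V)/V \cdot \partial_t$ for any spatial $X$; combined with $[\partial_t,\breve e_3]=0$ (which holds because $\breve e_3$ is $\phi_t$-Lie-transported along $C$), this yields $D_{\partial_t}\breve e_3 = (\breve e_3(V)/V)\partial_t$. Second, because the spatial metric is $t$-independent and the static slices are totally geodesic, $D_{\hat e_a}\breve e_3$ has no $\partial_t$-component and coincides with $\bar\nabla_{\hat e_a}\breve e_3$ when paired against spatial vectors. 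Assembling these with the orthogonality of $\partial_t$ with spatial vectors,
\[ \langle D_{e_a}\breve e_3, e_b\rangle = -V\tau_a\tau_b\,\breve e_3(V) + \langle \bar\nabla_{\hat e_a}\breve e_3, \hat e_b\rangle. \]
To trace against $\sigma^{ab}$, I use the rank-one relation $\sigma_{ab} = \hat\sigma_{ab} - V^2\tau_a\tau_b$ and its Sherman--Morrison inverse $\sigma^{ab} = \hat\sigma^{ab} + V^2(1+V^2|\nabla\tau|^2)\,\hat\nabla^a\tau\,\hat\nabla^b\tau$. The $\hat\sigma^{ab}$-piece reproduces $\widehat H$, and the remaining terms should assemble into $-(V/\sqrt{1+V^2|\nabla\tau|^2})\alpha_{\breve e_3}(\nabla\tau)$. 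I verify this by evaluating $\alpha_{\breve e_3}(\nabla\tau) = \langle D_{\nabla\tau}\breve e_3, \breve e_4\rangle$ directly, using the decomposition $\nabla\tau = |\nabla\tau|^2\partial_t + (1+V^2|\nabla\tau|^2)\hat\nabla\tau$ (consequence of $\sigma^{ab}\tau_b = (1+V^2|\nabla\tau|^2)\hat\nabla^a\tau$) and the formula \eqref{decompose_4} for $\breve e_4$.

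The main obstacle is purely bookkeeping: one must keep straight the two induced metrics $\sigma,\hat\sigma$ and their inverses, and verify that the $V\,\breve e_3(V)\,|\nabla\tau|^2$ term arising from the timelike part of $D_{\partial_t}\breve e_3$ matches, after the $V/\sqrt{1+V^2|\nabla\tau|^2}$ rescaling, the first piece of $\alpha_{\breve e_3}(\nabla\tau)$. The only substantive geometric inputs are that static slices are totally geodesic (equivalent to $D(\partial_t/V)=0$ on spatial vectors) and that $\breve e_3$ is $\phi_t$-invariant along $C$; the rest is algebra.
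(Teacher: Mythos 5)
Your computation is correct: the decomposition $e_a=\tau_a\partial_t+\hat e_a$, the facts $D_{\partial_t}\breve e_3=\frac{\breve e_3(V)}{V}\partial_t$ and $D_{\hat e_a}\breve e_3=\bar\nabla_{\hat e_a}\breve e_3$ (from the Killing property of $\partial_t$ and total geodesy of the slices), and the inverse-metric relation between $\sigma^{ab}$ and $\hat\sigma^{ab}$ do assemble, via $\alpha_{\breve e_3}(\nabla\tau)=(1+V^2|\nabla\tau|^2)^{3/2}V\hat h_{ab}\hat\nabla^a\tau\hat\nabla^b\tau-\sqrt{1+V^2|\nabla\tau|^2}\,\breve e_3(V)|\nabla\tau|^2$, into exactly \eqref{relation_mean_curvature}. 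The paper itself gives no proof of this proposition (it defers to Proposition 2.2 of \cite{Chen-Wang-Yau}), and your argument is the same direct frame computation along $C$ carried out there, so there is nothing substantive to distinguish; just note that your verification step implicitly re-derives \eqref{connection_reference_one}, i.e.\ Proposition \ref{connection_reference}, along the way.
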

\begin{prop} \label{connection_reference}
Along $C$, the connection one-form $\alpha_{\breve e_3}$ on $\Sigma$ satisfies
\begin{equation}\label{connection_reference_one}
(\alpha_{\breve e_3})_a =  \sqrt{1+V^2| \nabla \tau|^2} ( V \hat \nabla^b \tau  \hat h_{ab} - \breve e_3 (V) \tau_a )\end{equation}
where $\hat h_{ac}$ on the right hand side is the extension of the second fundamental form of $\widehat \Sigma$ to $C$ by the one-parameter family $\phi_t$.
\end{prop}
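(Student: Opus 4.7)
The plan is to evaluate $(\alpha_{\breve e_3})_a = \langle D_{e_a^\Sigma}\breve e_3, \breve e_4\rangle$ using a tangent frame of $\Sigma$ adapted to the static chart. Writing the embedding as $X=(\tau, X^1, X^2, X^3)$ and setting $\hat e_a = \partial_a X^i\,\partial/\partial x^i$, the tangent frame of $\Sigma$ splits as $e_a^\Sigma = \tau_a T + \hat e_a$ with $T = \partial/\partial t$, giving
\begin{equation*}
(\alpha_{\breve e_3})_a = \tau_a\,\langle D_T \breve e_3, \breve e_4\rangle + \langle D_{\hat e_a}\breve e_3, \breve e_4\rangle.
\end{equation*}
I would handle the two summands separately.

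For the first summand, I would exploit that $\breve e_3$ is by construction invariant under $\phi_t$, so $[T,\breve e_3]=0$ and $D_T\breve e_3 = D_{\breve e_3}T$. A short computation with the static Christoffel symbols (equivalently, using that $T$ is Killing and hypersurface-orthogonal) yields $D_X T = (X(V)/V)\,T$ for every spatial $X$; applied to $X=\breve e_3$ this gives $D_T\breve e_3 = (\breve e_3(V)/V)\,T$. Pairing with $\breve e_4$ via \eqref{decompose_t}, and using $\langle \breve e_4,\breve e_4\rangle=-1$ together with $\nabla\tau\perp \breve e_4$, one obtains $\langle T,\breve e_4\rangle = -V\sqrt{1+V^2|\nabla\tau|^2}$, so the first summand contributes $-\sqrt{1+V^2|\nabla\tau|^2}\,\breve e_3(V)\,\tau_a$, which matches the second term of the claimed identity.

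For the second summand, I would expand $\breve e_4$ via \eqref{decompose_4} into its $T/V$ and $V\hat\nabla\tau$ parts. The $T/V$ part contributes $\tfrac{1}{V}\langle D_{\hat e_a}\breve e_3, T\rangle$, which vanishes because $\langle \breve e_3,T\rangle=0$ and $D_{\hat e_a}T = (\hat e_a(V)/V)T$ is parallel to $T$. For the $V\hat\nabla\tau$ part, the same Christoffel identity yields $D_X(T/V)=0$ for spatial $X$, i.e.\ the static slices are totally geodesic in $\mathfrak{S}$, so the ambient derivative of one spatial vector along another reduces to the spatial derivative: $D_{\hat e_a}\breve e_3 = \bar\nabla_{\hat e_a}\breve e_3$. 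Since $\phi_t$ is an isometry between static slices, the pushforwards of $\breve e_3$ and $\hat h_{ab}$ remain the unit normal and second fundamental form of $\phi_\tau(\widehat\Sigma)$ in the slice at time $\tau$, so Weingarten gives $\langle \bar\nabla_{\hat e_a}\breve e_3, \hat e_c\rangle = \hat h_{ac}$. Contracting against $\hat\nabla^b\tau = \hat\sigma^{bc}\tau_c$ produces $\sqrt{1+V^2|\nabla\tau|^2}\,V\hat\nabla^b\tau\,\hat h_{ab}$, and adding the two contributions completes the identity.

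The main obstacle is essentially bookkeeping: the sign convention for $\hat h_{ab}$ (tied to the outward orientation of $\breve e_3$), and a careful justification that pushforward by $\phi_t$ preserves spatiality, the unit-normal property of $\breve e_3$ along $C$, and the shape operator of $\widehat\Sigma$. Once those identifications are in place, everything is driven by the explicit static-metric Christoffel identity $D_X T = (X(V)/V)T$ for spatial $X$ and the decomposition formulas \eqref{decompose_4}, \eqref{decompose_t}.
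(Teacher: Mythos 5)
The paper does not reprove this proposition; it asserts that Proposition 2.3 of \cite{Chen-Wang-Yau} carries over verbatim to a general static reference, and the argument there is precisely the direct computation you give: split the tangent vector of $\Sigma$ as $\tau_a\,\partial/\partial t+\hat e_a$, use that $\partial/\partial t$ is Killing with $D_X(\partial/\partial t)=(X(V)/V)\,\partial/\partial t$ for spatial $X$ (so the slices are totally geodesic), and pair against the decompositions \eqref{decompose_4}--\eqref{decompose_t}. Your two summands are evaluated correctly ($\langle \partial/\partial t,\breve e_4\rangle=-V\sqrt{1+V^2|\nabla\tau|^2}$ giving the $-\breve e_3(V)\tau_a$ term, and the Weingarten relation giving the $V\hat\nabla^b\tau\,\hat h_{ab}$ term), so the proposal is correct and takes essentially the same route as the source.
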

\begin{prop} \label{total_mean_mean_gauge}
In terms of the connection one-form in mean curvature gauge $\alpha_{H_0}$, we have
\[
\begin{split}
& \int V \widehat H d \widehat \Sigma = \int  \Big [ \sqrt{(1+V^2| \nabla \tau|^2) |H_0|^2V^2 + div(V^2 \nabla \tau)^2 } + div(V^2 \nabla \tau) \theta  -  \alpha_{H_0} (V^2 \nabla \tau)  \Big ] d \Sigma,
\end{split}
\] where \begin{equation}\label{gauge_angle}
\theta = - \sinh^{-1}  \frac{ div(V^2 \nabla \tau) }{|H_0|V \sqrt{1+V^2| \nabla \tau|^2} }
\end{equation}
and
\begin{equation}
\begin{split} \label{gauge}
- \frac{H_0}{|H_0|} = & \cosh \theta \breve e_3 + \sinh \theta \breve e_4 \\
\frac{J_0}{|H_0|} = & \sinh \theta \breve e_3 + \cosh \theta \breve e_4. \\
\end{split}
\end{equation}
\end{prop}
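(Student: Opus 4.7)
The plan is to derive the identity by combining Proposition \ref{proposition_mean_curvature_projection} with the hyperbolic boost relations in \eqref{gauge}, then moving everything onto $\Sigma$ by an area-element comparison and a single integration by parts.

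First, I would compare the two area elements. Since $\Sigma$ and $\widehat\Sigma$ are related by projecting out the $\tau$-component in the static chart, the induced metrics satisfy $\sigma_{ab} = \hat\sigma_{ab} - V^2\tau_a\tau_b$. Computing the determinant via the matrix determinant lemma and using $|\hat\nabla\tau|^2 = |\nabla\tau|^2/(1+V^2|\nabla\tau|^2)$ yields the pointwise conversion
\[
d\widehat\Sigma = \sqrt{1+V^2|\nabla\tau|^2}\, d\Sigma.
\]
Substituting Proposition \ref{proposition_mean_curvature_projection} then gives, after clearing the square root in the $\alpha_{\breve e_3}$-term,
\[
\int V\widehat H\, d\widehat\Sigma = \int \Big[-V\sqrt{1+V^2|\nabla\tau|^2}\,\langle H_0,\breve e_3\rangle - \alpha_{\breve e_3}(V^2\nabla\tau)\Big] d\Sigma.
\]

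Second, I would convert the frame from $\{\breve e_3,\breve e_4\}$ to the mean curvature gauge using \eqref{gauge}. The pairing $-\langle H_0,\breve e_3\rangle = |H_0|\cosh\theta$ is immediate from the first line of \eqref{gauge} and the signature. A short calculation using metric compatibility together with the boost identities expresses the connection one-form of the boosted frame as $\alpha_{H_0} = \alpha_{\breve e_3} - d\theta$, hence
\[
\alpha_{\breve e_3}(V^2\nabla\tau) = \alpha_{H_0}(V^2\nabla\tau) + \langle V^2\nabla\tau,\nabla\theta\rangle.
\]
Integration by parts on the closed surface $\Sigma$ replaces the last term by $-\int \mathrm{div}(V^2\nabla\tau)\,\theta\, d\Sigma$, producing the $\mathrm{div}(V^2\nabla\tau)\theta$ term of the claimed identity.

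Finally, I would identify the first term in the integrand by squaring the definition \eqref{gauge_angle} of $\theta$. From
\[
\sinh\theta = -\,\frac{\mathrm{div}(V^2\nabla\tau)}{|H_0|V\sqrt{1+V^2|\nabla\tau|^2}},
\]
the identity $\cosh^2\theta = 1 + \sinh^2\theta$ gives
\[
V\sqrt{1+V^2|\nabla\tau|^2}\,|H_0|\cosh\theta = \sqrt{(1+V^2|\nabla\tau|^2)|H_0|^2V^2 + \mathrm{div}(V^2\nabla\tau)^2},
\]
which is exactly the square-root term in the statement. Combining the three ingredients finishes the proof.

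The only nontrivial step is the boost identity $\alpha_{H_0} = \alpha_{\breve e_3} - d\theta$, and the main obstacle is bookkeeping the Lorentzian signs in its derivation ($\breve e_4$ timelike, $\breve e_3$ spacelike, and the convention \eqref{alpha_h} which pairs $DJ_0/|H_0|$ with $H_0/|H_0|$); everything else is an algebraic rearrangement and a single integration by parts that closely parallels Proposition~3.2 of \cite{Chen-Wang-Yau} in the constant-curvature reference.
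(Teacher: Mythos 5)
Your route---convert the area element via $d\widehat\Sigma=\sqrt{1+V^2|\nabla\tau|^2}\,d\Sigma$, substitute \eqref{relation_mean_curvature}, pass from $\alpha_{\breve e_3}$ to $\alpha_{H_0}$ using $\alpha_{H_0}=\alpha_{\breve e_3}-d\theta$ together with one integration by parts, and finish with $\cosh^2\theta=1+\sinh^2\theta$---is exactly the derivation the paper relies on (it does not reprove this proposition but cites Proposition 3.2 of \cite{Chen-Wang-Yau}), and each of those steps is set up correctly, including the sign bookkeeping in the gauge-change identity.

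There is, however, one genuine gap: you use two different characterizations of $\theta$ without showing they agree. In your second step $\theta$ is the boost angle of \eqref{gauge}, which gives $-\langle H_0,\breve e_3\rangle=|H_0|\cosh\theta$; in your final step you compute $\cosh\theta$ from the explicit formula \eqref{gauge_angle}. These two uses are consistent only if
\[
\langle H_0,\breve e_4\rangle \;=\; -\,\frac{ div(V^2\nabla\tau)}{V\sqrt{1+V^2|\nabla\tau|^2}},
\]
equivalently $|H_0|\sinh\theta=-div(V^2\nabla\tau)/\big(V\sqrt{1+V^2|\nabla\tau|^2}\big)$ for the boost angle; this is the one piece of genuine geometric content behind \eqref{gauge_angle}, and without it the square-root term in the conclusion is unjustified. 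The missing computation is short but essential: since $\frac{\partial}{\partial t}$ is a Killing field of $\check g$, for any closed surface one has $\langle H_0,(\frac{\partial}{\partial t})^{\perp}\rangle= div_{\Sigma}\big((\frac{\partial}{\partial t})^{T}\big)$, and by \eqref{decompose_t} the tangential and normal parts of $\frac{\partial}{\partial t}$ along $\Sigma$ are $-V^2\nabla\tau$ and $V\sqrt{1+V^2|\nabla\tau|^2}\,\breve e_4$, respectively. Combining these gives the displayed identity, hence \eqref{gauge_angle} for the boost angle, and with that inserted your argument closes.
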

In particular, 
\begin{equation} \label{gauge_change}   \langle H_0 , \breve e_4 \rangle  = |H_0| \sinh \theta, \quad
  - \langle H_0 , \breve e_3 \rangle  = |H_0| \cosh \theta, 
 \text{ \,\, and  \,\,} 
 \alpha_{H_0} = \alpha_{\breve e_3} - d \theta.
\end{equation} 

\subsection{Definition of quasi-local energy}
Let $\Sigma$ be  a surface  in a  general spacetime $N$ (not necessarily static). We assume the mean curvature vector $H$ of $\Sigma$ is spacelike and the normal bundle of $\Sigma$ is oriented. The data for defining the quasi-local energy consists of the triple $(\sigma,|H|,\alpha_H)$ where $\sigma$ is the induced metric, $|H|$ is the norm of the mean curvature vector, and $\alpha_H$ is the connection one-form of the normal bundle with respect to the mean curvature vector
\[ \alpha_H(\cdot )=\langle \nabla^N_{(\cdot)}   \frac{J}{|H|}, \frac{H}{|H|}   \rangle.  \]
Here $J$ is the reflection of $H$ through the incoming  light cone in the normal bundle. For an isometric embedding $X$ into the interior $\mathring{\mathfrak{S}}$ of a reference spacetime $\mathfrak{S}$ with the static potential $V$, we write $X= (\tau ,X^1,X^2,X^3)$ with respect to a fixed static chart. We define $\widehat{X}, \widehat{\Sigma}, \widehat{H}$ as in the last subsection. The quasi-local energy associated to the pair $(X,\frac{\partial}{\partial t})$ is defined to be
\begin{equation}\label{energy_fix_chart_base}
\begin{split}
  E(\Sigma, X,\frac{\partial}{\partial t})
= & \frac{1}{8 \pi}  \Big \{  \int V \widehat H d \widehat \Sigma -
 \int  \Big [ \sqrt{(1+ V ^2| \nabla \tau|^2) |H|^2   V ^2 + div( V ^2 \nabla \tau)^2 }  \\
& \qquad -   div( V ^2 \nabla \tau)  \sinh^{-1} \frac{ div( V ^2 \nabla \tau) }{ V  |H|\sqrt{1+ V ^2| \nabla \tau|^2} }
  -  V ^2 \alpha_{H} (\nabla \tau)  \Big ] d \Sigma
\Big \}.
\end{split}
\end{equation}

Using Proposition \ref{total_mean_mean_gauge}, we rewrite the quasi-local energy as follows:
\begin{equation}\label{energy_fix_chart_graph} 
\begin{split}
   E(\Sigma, X,\frac{\partial}{\partial t}) 
=& \frac{1}{8 \pi}  \Big \{  
 \int  \Big [ \sqrt{(1+ V ^2| \nabla \tau|^2) |H_0|^2  V ^2 + div( V ^2 \nabla \tau)^2 }  \\
& \qquad -   div( V ^2 \nabla \tau)  \sinh^{-1} \frac{ div( V ^2 \nabla \tau) }{ V  |H_0|\sqrt{1+ V ^2| \nabla \tau|^2} }
  -  V ^2 \alpha_{H_0} (\nabla \tau)  \Big ] d \Sigma\\
& -  \int  \Big [ \sqrt{(1+ V ^2| \nabla \tau|^2) |H|^2   V ^2 + div( V ^2 \nabla \tau)^2 }  \\
& \qquad -   div( V ^2 \nabla \tau)  \sinh^{-1} \frac{ div( V ^2 \nabla \tau) }{ V  |H|\sqrt{1+ V ^2| \nabla \tau|^2} }
  -  V ^2 \alpha_{H} (\nabla \tau)  \Big ] d \Sigma
\Big \}.
\end{split}
\end{equation}

The optimal isometric embeddings is defined as in \cite{Chen-Wang-Yau}.
\begin{definition} Let $\mathfrak{S}$ be a reference spacetime. 
An optimal isometric embedding for the data $(\sigma, |H|, \alpha_H)$ is an isometric embedding $X_0$ of $\sigma$ into $\mathring{\mathfrak{S}}$  that is a critical point of the quasi-local energy $E(\Sigma, X, \frac{\partial}{\partial t})$ among all nearby isometric embeddings $X$ of $\sigma$ into $\mathring{\mathfrak{S}}$.
\end{definition}
We show that for a surface in the interior $\mathring{\mathfrak{S}}$ of the reference static spacetime, the identity embedding is an  optimal isometric embedding.
\begin{theorem} \label{thm_own_critical}
The identity isometric embedding for a surface $\Sigma$ in the interior $\mathring{\mathfrak{S}}$ of $\mathfrak{S}$ is a critical point of its own quasi-local energy. Namely, suppose $\Sigma$ in $\mathring{\mathfrak{S}}$  is defined by an embedding $X_0$. Consider a family of isometric embeddings $X(s)$, $-\epsilon<s<\epsilon$ such that $X(0)=X_0$. Then we have
\[  \frac{d}{ds}|_{s=0} E(\Sigma, X(s), \frac{\partial}{\partial t})= 0. \]
\end{theorem}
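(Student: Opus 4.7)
The plan is to differentiate the expression \eqref{energy_fix_chart_graph} directly and show the first variation vanishes at $s=0$. Denote
\begin{equation*}
I(V,\tau,k,\beta)=\sqrt{(1+V^2|\nabla\tau|^2)k^2V^2+D^2}-D\sinh^{-1}\!\tfrac{D}{kV\sqrt{1+V^2|\nabla\tau|^2}}-V^2\beta(\nabla\tau),
\end{equation*}
with $D=\mathrm{div}(V^2\nabla\tau)$, and set $\mathcal F[V,\tau,k,\beta]=\int_{\Sigma_0}I\,d\Sigma$. Then \eqref{energy_fix_chart_graph} reads $8\pi E(\Sigma,X(s),\tfrac{\partial}{\partial t})=\mathcal F[V(s),\tau(s),|H_0(s)|,\alpha_{H_0(s)}]-\mathcal F[V(s),\tau(s),|H|,\alpha_H]$. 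Since $X(0)$ is the identity embedding, $|H_0(0)|=|H|$ and $\alpha_{H_0(0)}=\alpha_H$ on $\Sigma_0$, so the two copies of $\mathcal F$ share all four arguments at $s=0$ and the corresponding $V$- and $\tau$-partials of $I$ take identical values at the two evaluation points. Under $\tfrac{d}{ds}|_{s=0}$, the $V$- and $\tau$-contributions therefore cancel between the two terms, leaving
\begin{equation*}
8\pi\,\tfrac{d}{ds}\big|_{s=0}E=\int_{\Sigma_0}\!\Big[\tfrac{\partial I}{\partial k}\,\delta|H_0|+\tfrac{\partial I}{\partial\beta}\!\cdot\delta\alpha_{H_0}\Big]d\Sigma.
\end{equation*}

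A direct differentiation of $I$, in which the derivative of $-D\sinh^{-1}(\cdot)$ absorbs part of the derivative of the square root, yields the clean identities $\partial I/\partial k=P/k$ with $P=\sqrt{(1+V^2|\nabla\tau|^2)k^2V^2+D^2}$, and $(\partial I/\partial\beta)\!\cdot\delta\beta=-V^2\delta\beta(\nabla\tau)$. At the identity embedding, $P/k=V\sqrt{1+V^2|\nabla\tau|^2}\cosh\theta$ by \eqref{gauge_angle}. The theorem thus reduces to
\begin{equation*}
\int_{\Sigma_0}\!\Big[V\sqrt{1+V^2|\nabla\tau|^2}\cosh\theta\cdot\delta|H_0|-V^2\,\delta\alpha_{H_0}(\nabla\tau)\Big]d\Sigma=0
\end{equation*}
for every isometric variation. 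Writing $Y=\tfrac{d}{ds}|_{s=0}X(s)$ and decomposing $Y$ into tangential and normal components along $\Sigma_0$, the tangential part does not move the image and can be discarded, while the normal components $\eta_3,\eta_4$ relative to $\{\breve e_3,\breve e_4\}$ satisfy a linear constraint imposed by preservation of $\sigma$. Standard first-variation formulas (as used in the parallel treatment of \cite{Chen-Wang-Yau} for the de~Sitter/Anti--de~Sitter reference) then give $\delta|H_0|$ and $\delta\alpha_{H_0}$ in terms of $\eta_3,\eta_4$, their covariant derivatives on $\Sigma_0$, and ambient curvature of $\mathfrak S$; substitution and integration by parts should exhibit the integrand as a total divergence.

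The main obstacle is the bookkeeping in this integration by parts. The essential tools are the gauge identities \eqref{gauge_change}, which trade $\alpha_{H_0}$ for $\alpha_{\breve e_3}-d\theta$, and Proposition~\ref{proposition_mean_curvature_projection}, which links $\widehat H$ to $-\langle H_0,\breve e_3\rangle$ and $\alpha_{\breve e_3}(\nabla\tau)$. A cleaner alternative is to differentiate the identity $\mathcal F[V,\tau,|H_0|,\alpha_{H_0}]=\int V\widehat H\,d\widehat\Sigma$ of Proposition~\ref{total_mean_mean_gauge} directly in $s$: its left side expands by the chain rule into the desired $(|H_0|,\alpha_{H_0})$-part plus an explicit $(V,\tau)$-part, while its right side admits a standard geometric first-variation formula as $\widehat\Sigma$ deforms in the static slice (involving $\hat h$, $\bar\nabla V$, and the intrinsic curvature of the slice). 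Matching the two expansions pinpoints the desired cancellation without having to evaluate $\delta|H_0|$ and $\delta\alpha_{H_0}$ in isolation.
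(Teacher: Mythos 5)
Your reduction is correct and is in fact equivalent to the paper's first step: writing the energy in the form \eqref{energy_fix_chart_graph}, noting that at $s=0$ the physical data $(|H|,\alpha_H)$ coincide with the reference data $(|H_0|,\alpha_{H_0})$ of $X_0(\Sigma)$, and cancelling the $V$- and $\tau$-partials between the two copies of $\mathcal F$ is the same as the paper's statement that it suffices to prove $\delta\mathfrak{H}_1=\delta\mathfrak{H}_2$ with $H_0$ and $\alpha_{H_0}$ frozen in $\mathfrak{H}_2$ (your ``cleaner alternative'' of differentiating the identity of Proposition~\ref{total_mean_mean_gauge} in $s$ makes this explicit). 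Your computation $\partial I/\partial k = P/k$ and the identification $P/k = V\sqrt{1+V^2|\nabla\tau|^2}\cosh\theta$ are also correct.

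The genuine gap is that the proposal stops exactly where the proof begins. Both of your routes end with ``substitution and integration by parts should exhibit the integrand as a total divergence'' and ``matching the two expansions pinpoints the desired cancellation,'' but neither cancellation is automatic, and neither is a formal bookkeeping exercise. In the paper, $\delta\mathfrak{H}_2$ is computed via Propositions \ref{proposition_mean_curvature_projection} and \ref{connection_reference} (which convert $-\langle H_0,\breve e_3\rangle$ and $\alpha_{\breve e_3}$ into $\widehat H$, $\hat h_{ab}$ and $\breve e_3(V)$), and after comparing with $\delta\mathfrak{H}_1=\int \widehat H\,\delta A\,d\Sigma+\int V\,\delta\widehat H\,d\widehat\Sigma$ everything reduces to the identity \eqref{eq_hat} on $\widehat\Sigma$, namely $\int V[\delta\widehat H+\tfrac12\hat h^{ab}(\delta\hat\sigma)_{ab}]\,d\widehat\Sigma=\tfrac12\int \breve e_3(V)\hat\sigma^{ab}(\delta\hat\sigma)_{ab}\,d\widehat\Sigma$. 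Verifying this requires the first and second variation formulas \eqref{first_variation_metric}--\eqref{second_variation_metric_1} \emph{and} the static equation of the reference slice: the residual bulk terms $\int\beta[\,\bar\Delta V-\bar\nabla^2V(\nu,\nu)+V\,Ric(\nu,\nu)]$ and their tangential analogues (equations \eqref{equality_b}--\eqref{equality_a}) vanish only because of the structure of $(M,g,V)$, not for an arbitrary positive function $V$ on an arbitrary slice. Your proposal never identifies where staticity enters, which is the essential geometric input; without it the ``total divergence'' claim is false. A secondary, smaller point: discarding the tangential part of the variation because it ``does not move the image'' is not immediate once $|H_0|$ and $\alpha_{H_0}$ are being differentiated separately from the frozen data --- $\delta|H_0|$ and $\delta\alpha_{H_0}$ then contain Lie-derivative terms $P^a\nabla_a|H_0|$ and $\mathcal{L}_P\alpha_{H_0}$; the reduced identity does hold for tangential variations, but by reparametrization invariance of $E$ as a functional of the image, which should be said.
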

\begin{proof}
Denote $\frac{d}{ds}|_{s=0}$ by $\delta$ and set 
\[\frak{H}_1= \int  V  \widehat H d \widehat \Sigma \]
and 
\[\begin{split} \frak{H}_2=& \int  \Big [ \sqrt{(1+ V ^2| \nabla \tau|^2) |H_0|^2   V ^2 + div( V ^2 \nabla \tau)^2 } \\
&-   div( V ^2 \nabla \tau)  \sinh^{-1} \frac{ div( V ^2 \nabla \tau) }{ V  |H_0|\sqrt{1+ V ^2| \nabla \tau|^2} }   -  V ^2 \alpha_{H_0} (\nabla \tau)  \Big ] d \Sigma.\end{split}\]
It suffices to prove that $\delta \frak{H}_1=\delta \frak{H}_2$,
where for the variation of $\frak{H}_2$, it is understood that $H_0$ and $\alpha_{H_0}$ are fixed at their values at the initial surface  $X_0(\Sigma)$ and only $\tau$ and $ V $ are varied.

It is convenient to rewrite   $\frak{H}_1$  and $\frak{H}_2$ in terms of the following two quantities: $A =  V  \sqrt{1+ V ^2|\nabla \tau|^2}$ and $B= div( V ^2 \nabla \tau)$. In terms of $A$ and $B$ 
\[
\begin{split}
\frak{H}_1= &\int \widehat H A \,d \Sigma\\
 \frak{H}_2=&\int \left[ \sqrt{|H_0|^2A^2+B^2} - B \sinh^{-1} \frac{B}{|H_0| A}  - \alpha_{H_0}( V ^2 \nabla \tau) \right] d \Sigma. 
\end{split}
\] 
As a result, we have
\[
\begin{split}
 \delta \frak{H}_2 
=&\int \left [\delta A(\frac{|H_0|^2A}{\sqrt{|H_0|^2A^2+B^2}} + \frac{B^2}{A\sqrt{|H_0|^2A^2+B^2}}) \right]d\Sigma \\
& -\int \left[ (\delta B)  \sinh^{-1} \frac{B}{|H_0|A}+\alpha_{H_0}(\delta(  V ^2 \nabla \tau)) \right] d \Sigma\\
=&\,\mbox{I}-\mbox{II}
\end{split}
\]

By \eqref{gauge_change} and $\sinh \theta = -\frac{B}{|H_0|A}$, integrating by parts gives
\[\mbox{II} =\int \left[ \delta( V ^2 \nabla\tau)\cdot \nabla\theta+\alpha_{H_0}(\delta(  V ^2 \nabla \tau))\right] d\Sigma=\int \alpha_{\breve e_3} (\delta(  V ^2 \nabla \tau)) d\Sigma.
\]

On the other hand, we simplify the integrand of $\mbox{I}$ using \eqref{gauge_angle}, \[\frac{|H_0|^2A}{\sqrt{|H_0|^2A^2+B^2}} + \frac{B^2}{A\sqrt{|H_0|^2A^2+B^2}}=\frac{\sqrt{|H_0|^2A^2+B^2}}{A}=-\langle H_0, \breve{e}_3\rangle.\]

Therefore, by  \eqref{relation_mean_curvature}, $\mbox{I}$ is equal to 
\[
\begin{split}
& \int   (-\langle H_0, \breve e_3\rangle)\delta A d\Sigma \\
=& \int   [\widehat H + \frac{ V  \, \alpha_{\breve e_3} (\nabla \tau)  }{ \sqrt{1+ V ^2| \nabla \tau|^2}}]\delta A d\Sigma\\
= & \int \widehat H\delta Ad\Sigma + \int   \left[\frac{(\delta  V )  V ^3|\nabla \tau|^2 +  V ^4 \nabla \tau \nabla \delta \tau}{1+ V ^2|\nabla \tau|^2}( \alpha_{\breve{e}_3}(\nabla\tau)) +( \delta  V )   V   \alpha_{\breve{e}_3}(\nabla\tau) \right] d\Sigma.
\end{split}
\]
and 
\begin{equation} \label{del_H2}
\begin{split}
 \delta\frak{H}_2&=  \int \widehat H\delta Ad\Sigma  + \int \left[  \frac{(\delta  V )  V ^3|\nabla \tau|^2 +  V ^4 \nabla \tau \nabla \delta \tau}{1+ V ^2|\nabla \tau|^2}( \alpha_{\breve{e}_3}(\nabla\tau)) 
- \alpha_{\breve{e}_3}( V \delta  V   \nabla\tau +  V ^2 \nabla \delta \tau)\right] d\Sigma\\
&=  \int \widehat H\delta Ad\Sigma  - \int  (\alpha_{\breve{e}_3})_a (\sigma^{ac} -\frac{ V ^2 \nabla^a \tau \nabla^c \tau}{1+ V ^2|\nabla \tau|^2})( V  \delta  V   \tau_c+  V ^2 \delta \tau_c) d\Sigma \\
&=  \int \widehat H\delta Ad\Sigma  +\int - (\alpha_{\breve{e}_3})_a \hat \sigma^{ac}(  V \delta  V   \tau_c+  V ^2 \delta \tau_c) d\Sigma.
\end{split}
\end{equation}
Applying  Proposition \ref{connection_reference}, the second integral in the last line can be rewritten as

\[
\begin{split}
& \int   \sqrt{1+ V ^2| \nabla \tau|^2}( \breve e_3 ( V ) \tau_a -  V  \hat  \nabla^b \tau \hat h_{ab}) \hat \sigma^{ac}( V  \delta  V   \tau_c+  V ^2 \delta \tau_c) d \Sigma  \\
= & \int [\breve e_3( V )\hat \sigma^{ab}-  V  \hat h^{ab} ]  (  V  \delta  V   \tau_a \tau_b +  V ^2 \tau_a \delta \tau_b)  d \widehat \Sigma \\
= &  \frac{1}{2}\int [\breve e_3( V )\hat \sigma^{ab}-  V  \hat h^{ab} ]  (\delta \hat  \sigma)_{ab}  d \widehat \Sigma.
\end{split}
\]

On the other hand,  as $ V  d\widehat\Sigma=A d\Sigma$ and $\delta d\Sigma=0$,
\begin{equation}\label{del_H1}
\begin{split}
\delta\frak{H}_1=   \int \widehat H\delta Ad\Sigma  +  \int  V  \delta \widehat H   d \widehat \Sigma . \\
\end{split}
\end{equation}

To prove $\delta \frak{H}_1=\delta\frak{H}_2$, by  \eqref{del_H2} and \eqref{del_H1}, it suffices to show
\begin{equation}\label{eq_hat}\int  V  \left[ \delta \widehat H +\frac{1}{2}  \hat h^{ab} (\delta \hat  \sigma)_{ab}\right]d \widehat \Sigma=\frac{1}{2}\int \left[\breve e_3( V )\hat \sigma^{ab}  (\delta \hat  \sigma)_{ab}\right]  d \widehat \Sigma.\end{equation}

We decompose $\delta \widehat X$ into tangential and normal parts to $\widehat \Sigma$. Let
\[ \delta \widehat X = P^a \frac{\partial \widehat X}{\partial v^a} + \beta \nu.  \]
For the first and second variations of the induced metric, we have
\begin{equation}
\label{first_variation_metric} (\delta \hat \sigma)_{ab}   =2 \beta \hat h_{ab} + \hat \nabla_a( P^c \hat \sigma_{cb}) + \hat \nabla_b (P^c \hat \sigma_{ca}) 
\end{equation}
and
\begin{equation}\label{second_variation_metric_1} 
\begin{split}
 \delta \widehat H =& - \widehat  \Delta \beta  - Ric(e_3,e_3) \beta -\beta \hat \sigma^{ab} \hat \sigma^{dc} \hat h_{ac} \hat h_{bd} + P^a \hat \nabla_a \widehat H \\
=&  - \widehat  \Delta \beta  - Ric(e_3,e_3) \beta -\beta \hat \sigma^{ab} \hat \sigma^{dc} \hat h_{ac} \hat h_{bd} + P^a \hat \nabla^b \hat h_{ab}-P^c Ric(\frac{\pl \widehat X }{\pl v^c},e_3),
\end{split}
\end{equation} where the Codazzi equation is used. 

We derive from \eqref{first_variation_metric} and \eqref{second_variation_metric_1}
\begin{equation}
\label{second_variation_metric}  \delta \widehat H +\frac{1}{2} \hat h^{ab}  (\delta \hat \sigma)_{ab} =- \widehat \Delta \beta - Ric(e_3,e_3) \beta +   \hat \nabla^b( P^c \hat h_{cb})-P^c Ric(\frac{\pl \widehat X }{\pl v^c},e_3).\end{equation}
\eqref{eq_hat} is thus equivalent to 
\[
\begin{split}
\int   V   [- \widehat \Delta \beta - Ric(e_3,e_3) \beta +  \hat \nabla^b( P^c \hat h_{cb})-P^c Ric(\frac{\pl \widehat X }{\pl v^c},e_3)] d \widehat \Sigma=\int \breve e_3( V ) [ \beta \widehat H + \hat \nabla^b( P^c \hat \sigma_{cb})]  d \widehat \Sigma . 
\end{split}
\]
The above equality follows from the following two identities:
\begin{align}
\label{equality_b}\int \breve e_3( V )  \beta \widehat H  d \widehat \Sigma = & \int   V   [- \widehat \Delta \beta -Ric(e_3,e_3) \beta]  d \widehat \Sigma \\
\label{equality_a}\int \breve e_3( V ) \hat \nabla^b( P^c \hat \sigma_{cb}) d \widehat \Sigma = & \int   V   [  \hat \nabla^b (P^c \hat h_{cb}) -Ric(e_3,c)]d \widehat \Sigma,
   \end{align} which can be derived by integrating by parts and the static equation.
\end{proof}
We define the quasi-local energy density $\rho$ with respect to the isometric embedding $X$.
\begin{definition} The quasi-local energy density with respect to the isometric embedding $X$ is defined to be 
\begin{equation} \label{rho} \begin{split}\rho &= \frac{\sqrt{|H_0|^2 +\frac{(div V^2 \nabla \tau)^2}{V^2+V^4 |\nabla \tau|^2}} - \sqrt{|H|^2 +\frac{(div V^2 \nabla \tau)^2}{V^2+V^4 |\nabla \tau|^2}} }{ V\sqrt{1+ V^2|\nabla \tau|^2}}. \end{split}\end{equation}
\end{definition}
An immediate consequence of Theorem \ref{thm_own_critical} is the following formula for the first variation of the quasi-local energy:
\begin{theorem} \label{thm_first_variation_graph}
Let $\Sigma$ be a surface in a physical spacetime with the data  $(\sigma,|H|, \alpha_H)$. Let $X_0$ be an isometric embedding of $\sigma$ into the interior  $\mathring{\mathfrak{S}}$ of the reference spacetime and let $(|H_0|, \alpha_{H_0})$
 be the corresponding data on $X_0(\Sigma)$. Consider a family of isometric embeddings $X(s)$, $-\epsilon<s<\epsilon$ such that $X(0)=X_0$. Then we have
\begin{equation}\label{first_variation_graph}
\begin{split}
   & \frac{d}{ds}|_{s=0} E(\Sigma, X(s),\frac{\partial}{\partial t}) \\
=& \frac{1}{8 \pi} \int  \delta V \left [ \rho   V  (1+ 2  V ^2|\nabla \tau|^2)  -2   V  \nabla \tau \nabla \sinh^{-1} \frac{\rho div ( V ^2 \nabla \tau)}{|H_0||H|}  + (\alpha_{H} - \alpha_{H_0})(2  V  \nabla \tau)  \right ]   d \Sigma \\
 & +\frac{1}{8 \pi} \int (\delta \tau)   div\left [   V ^2 \nabla \sinh^{-1} \frac{\rho div ( V ^2 \nabla \tau)}{|H_0||H|}  - \rho  V ^4 \nabla \tau + V ^2(\alpha_{H_0} - \alpha_H) \right ]   d \Sigma,
\end{split}  
\end{equation} where $\delta\tau=\frac{d}{ds}|_{s=0} \tau(s)$, $\delta X^i=\frac{d}{ds}|_{s=0} X^i(s)$ and $\delta V=\delta X^i \bar\nabla_i  V$.
\end{theorem}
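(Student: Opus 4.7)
The plan is to invoke Theorem~\ref{thm_own_critical} to remove from the computation the variation of the reference curvature data $(|H_0|,\alpha_{H_0})$. Writing
\[
E(\Sigma,X,\tfrac{\partial}{\partial t}) = \tfrac{1}{8\pi}\left[\int V\widehat H\,d\widehat\Sigma - \int \mathcal F(V,\tau,|H|,\alpha_H)\,d\Sigma\right],
\]
with $\mathcal F$ denoting the bracketed integrand in \eqref{energy_fix_chart_base}, the key observation is that the identity $\delta\mathfrak{H}_1=\delta\mathfrak{H}_2$ derived in the proof of Theorem~\ref{thm_own_critical} depends only on Proposition~\ref{connection_reference} and the static equation; it never uses that $(|H_0|,\alpha_{H_0})$ matches $(|H|,\alpha_H)$ at $X_0$. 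Hence that identity applies verbatim in the present setting and gives
\[
\delta\!\int V\widehat H\,d\widehat\Sigma \;=\; \delta_{V,\tau}\!\int \mathcal F(V,\tau,|H_0|,\alpha_{H_0})\,d\Sigma,
\]
where on the right $|H_0|,\alpha_{H_0}$ are frozen at their $X_0$-values and only $V,\tau$ carry the $s$-dependence. Consequently
\[
8\pi\,\delta E = \int \bigl[\delta_{V,\tau}\mathcal F(V,\tau,|H_0|,\alpha_{H_0}) - \delta_{V,\tau}\mathcal F(V,\tau,|H|,\alpha_H)\bigr]\,d\Sigma.
\]

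Next I would differentiate $\mathcal F$ directly. With $A=V\sqrt{1+V^2|\nabla\tau|^2}$ and $B=div(V^2\nabla\tau)$ as in the proof of Theorem~\ref{thm_own_critical}, the $B\,\delta B$ contributions coming from $\delta\sqrt{|H|^2A^2+B^2}$ and from $B\,\delta\!\sinh^{-1}(B/|H|A)$ cancel, leaving
\[
\delta_{V,\tau}\mathcal F = \frac{\sqrt{|H|^2A^2+B^2}}{A}\,\delta A - (\delta B)\sinh^{-1}\!\frac{B}{|H|A} - \alpha_H(\delta(V^2\nabla\tau)).
\]
Subtracting the $|H|$-version from the $|H_0|$-version, the first terms combine to $\rho A\,\delta A$ by the very definition \eqref{rho} of $\rho$, while the two inverse hyperbolic sines consolidate via $\sinh^{-1}u-\sinh^{-1}v=\sinh^{-1}(u\sqrt{1+v^2}-v\sqrt{1+u^2})$ into $-\sinh^{-1}\!\frac{\rho B}{|H_0||H|}$. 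This yields
\[
8\pi\,\delta E = \int \Bigl[\rho A\,\delta A + (\delta B)\sinh^{-1}\!\frac{\rho B}{|H_0||H|} - (\alpha_{H_0}-\alpha_H)(\delta(V^2\nabla\tau))\Bigr]\,d\Sigma.
\]

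The remaining step is bookkeeping. I would expand $A\,\delta A = V\,\delta V(1+2V^2|\nabla\tau|^2)+V^4\nabla\tau\cdot\nabla\delta\tau$ and $\delta(V^2\nabla\tau)=2V\,\delta V\,\nabla\tau+V^2\nabla\delta\tau$, then integrate by parts on the closed surface every occurrence of $\nabla\delta\tau$ together with the divergence hidden in $\delta B=div(\delta(V^2\nabla\tau))$. Gathering the coefficients of $\delta V$ and of $\delta\tau$ reproduces the two integrals in \eqref{first_variation_graph}.

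The main obstacle is the algebraic identity consolidating the two $\sinh^{-1}$ terms into a single one with argument $\rho B/(|H_0||H|)$. One has to verify $\rho A^2=\sqrt{|H_0|^2A^2+B^2}-\sqrt{|H|^2A^2+B^2}$ (immediate from \eqref{rho} after clearing the common factor $V\sqrt{1+V^2|\nabla\tau|^2}=A$) and track signs so that the surviving $\sinh^{-1}$ has the stated argument. Once that identity is secured, the rest is differentiation of $A,B$ in $V,\tau$ and routine integration by parts.
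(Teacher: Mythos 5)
Your proposal is correct and follows exactly the route the paper intends: the paper's proof simply defers to Theorem 5.4 of \cite{Chen-Wang-Yau} with Theorem \ref{thm_own_critical} substituted in, and your argument is that computation carried out explicitly --- using $\delta\mathfrak{H}_1=\delta_{V,\tau}\mathfrak{H}_2^{\mathrm{ref}}$ with $(|H_0|,\alpha_{H_0})$ frozen, the cancellation of the $B\,\delta B$ terms, the identity $\rho A^2=\sqrt{|H_0|^2A^2+B^2}-\sqrt{|H|^2A^2+B^2}$ consolidating the two $\sinh^{-1}$ terms, and integration by parts. The key observation that the identity $\delta\mathfrak{H}_1=\delta\mathfrak{H}_2$ concerns only the reference-side data of the image surface $X_0(\Sigma)$ and never uses the physical data is exactly what justifies the transfer, and your bookkeeping reproduces \eqref{first_variation_graph} term by term.
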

\begin{proof}
The proof is identical to the proof of Theorem 5.4 of \cite{Chen-Wang-Yau} where Theorem 5.3 of \cite{Chen-Wang-Yau} is replaced by Theorem \ref{thm_own_critical} above.
\end{proof}

\section{A Reilly-type formula for static manifolds}
In this section, we generalize Lemma 6.1 of \cite{Chen-Wang-Yau} for de Sitter and anti-de Sitter spacetimes to general static spacetimes. The proof  of \cite[Lemma 6.1]{Chen-Wang-Yau} relies on a Reilly-type formula for functions on space forms in \cite{Qiu-Xia}. We first prove a Reilly-type formula for a pair $(V,Y)$ of a positive function $V$ and a one-form $Y$ on a Riemannian manifold $(M,g) $ following the recent work of \cite{LX}. Then we apply the Reilly-type formula for the pair to the case where $V$ is the static potential of the reference spacetime. 

Let $(M,g)$ be a Riemannian n-manifold and $\bar \na$ and $\bar \Delta$ be the covariant derivatives and the Laplace operator with respect to $g$. Let $\Omega$ be a bounded domain with smooth boundary $\pl\Omega$ in $M$ . Let $\mathbb{II}$ and $H$ be the second fundamental form and mean curvature of $\pl\Omega$  and $\na$ be the covariant derivative on $\pl\Omega$.

\begin{prop}\label{Reilly-form} Let $V$ be a positive function on $\Omega$ and $Y$ be a one-form on $\Omega$.
Let $Y^T$ be the tangential component of $Y$ to $\pl\Omega$. We have the following integral identity
\begin{align}\label{integral identity}
\begin{split}
&\int_{\pl\Omega} \lt [ - \frac{1}{V} \mathbb{II}(Y^T, Y^T) + \frac{1}{V^2} \frac{\pl V}{\pl \nu} |Y^T|^2 - \frac{1}{V} H \langle Y,\nu \rangle^2 - \frac{2}{V} \na_a (Y^T)^a \langle Y,\nu \rangle \rt ]  dA \\
=& \int_\Omega  \Bigg [\frac{1}{V^2} \lt( \bar \Delta V g - \bar \na^2 V + V Ric \rt)(Y,Y) + \frac{1}{4V} | \bar \na_iY_j + \bar \na_j Y_i|^2 -\frac{1}{V} (\bar \na_i Y^i)^2 \\
   & \quad - \frac{V^3}{4}\lt | \bar \na_i \lt( \frac{Y_j}{V^2}\rt) - \bar \na_j \lt( \frac{Y_i}{V^2}\rt) \rt |^2  \Bigg] d\Omega.
\end{split}
\end{align}
where $\nu$ is the outward unit normal of $\pl\Omega$.
\end{prop}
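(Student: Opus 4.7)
The overall plan is to reduce the identity to an application of the divergence theorem: construct a vector field $W$ on $\Omega$ whose divergence equals the bulk integrand on the right-hand side and whose normal flux on $\pl\Omega$ (possibly after a boundary integration by parts) equals the left-hand side. Two derivatives must be produced somewhere: commuting covariant derivatives on $Y$ yields a $Ric(Y,Y)$ term via the Ricci identity, while subsequent integrations by parts on $\bar \na V$-factors produce $\bar\na^2 V(Y,Y)$ and $\bar\Delta V \cdot |Y|^2$. These three pieces assemble into the null-convergence quantity $\bar\Delta V g - \bar\na^2 V + V Ric$ in the final answer.

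Step one is to begin with the Bochner-type divergence identity
\begin{equation*}
\bar\na^i(Y^j \bar\na_j Y_i - Y_i \bar\na_j Y^j) = (\bar\na_i Y^j)(\bar\na_j Y^i) - (\bar\na_j Y^j)^2 + Ric(Y,Y),
\end{equation*}
derived from $\bar\na^i \bar\na_j Y_i - \bar\na_j \bar\na^i Y_i = R_{jl}Y^l$. Weighted by $1/V$ and integrated over $\Omega$, the divergence theorem captures the flux of $W^i := \tfrac{1}{V}(Y^j \bar\na_j Y^i - Y^i \bar\na_j Y^j)$ on $\pl\Omega$, while the product-rule correction $\tfrac{\bar\na^i V}{V^2}(Y_i \bar\na_j Y^j - Y^j \bar\na_j Y_i)$ remains in the interior. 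A second integration by parts on this correction, shifting $\bar\na_j$ off $Y$, produces bulk terms proportional to $\bar\na^2 V (Y,Y)/V^2$, $\bar\Delta V \cdot |Y|^2/V^2$, and quartic $\bar\na V \otimes \bar\na V / V^3$ contributions, plus an additional boundary residue on $\pl\Omega$.

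Step two is to decompose $(\bar\na_i Y^j)(\bar\na_j Y^i) = \tfrac{1}{4}|\bar\na_i Y_j + \bar\na_j Y_i|^2 - \tfrac{1}{4}|\bar\na_i Y_j - \bar\na_j Y_i|^2$. The symmetric piece already matches the $\tfrac{1}{4V}|\bar\na_i Y_j + \bar\na_j Y_i|^2$ term on the RHS. For the antisymmetric piece, combine with the quartic $\bar\na V$-cross-terms from step one using the pointwise identity
\begin{equation*}
V^2 \lt[\bar\na_i(Y_j/V^2) - \bar\na_j(Y_i/V^2)\rt] = (\bar\na_i Y_j - \bar\na_j Y_i) - \tfrac{2}{V}(Y_j \bar\na_i V - Y_i \bar\na_j V),
\end{equation*}
which on squaring expands into precisely these three types of terms. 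Verifying that all coefficients line up so that the assembly equals $-\tfrac{V^3}{4}|\bar\na_i(Y_j/V^2) - \bar\na_j(Y_i/V^2)|^2$ on the nose — with no leftover bulk contribution — is the main computational obstacle.

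Step three is to match the total boundary contribution with the stated LHS. Decomposing $Y = Y^T + \langle Y,\nu\rangle\nu$ on $\pl\Omega$ and using $\mathbb{II}(Y^T, Y^T) = \langle \bar\na_{Y^T} Y^T, \nu\rangle$, $H = \text{div}_{\pl\Omega}\nu$, together with the hypersurface divergence decomposition
\begin{equation*}
\bar\na_j Y^j\big|_{\pl\Omega} = \na_a(Y^T)^a + H\langle Y,\nu\rangle + \nu(\langle Y,\nu\rangle) - \langle Y^T, \bar\na_\nu \nu\rangle,
\end{equation*}
I would expand the flux of $W$ and combine it with the boundary residue from step one. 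The $\tfrac{1}{V^2}\tfrac{\pl V}{\pl \nu}|Y^T|^2$ term arises from pairing $\bar\na V /V^2$ with $Y^T \otimes Y^T$ in that residue; a final integration by parts along $\pl\Omega$ (against tangential derivatives of $Y^T$ and $V$) recasts the leftover mixed contributions into the cross term $-\tfrac{2}{V}\na_a (Y^T)^a \langle Y,\nu\rangle$, with any $\nu(\langle Y,\nu\rangle)$ pieces cancelling between the two sources.
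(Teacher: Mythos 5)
Your plan is correct and follows essentially the same route as the paper's proof: a $1/V$-weighted Bochner/Reilly computation in which the commutation of derivatives produces $Ric(Y,Y)$, integrations by parts against $\bar\na V/V^2$ produce the $\bar\na^2V(Y,Y)$ and $\bar\Delta V\,|Y|^2$ terms, the substitution $\bar\na_iY_j-\bar\na_jY_i=\tfrac{2}{V}(V_iY_j-V_jY_i)+V^2\bigl[\bar\na_i(Y_j/V^2)-\bar\na_j(Y_i/V^2)\bigr]$ absorbs the antisymmetric square together with the quartic $\bar\na V$ terms into $-\tfrac{V^3}{4}\bigl|\bar\na_i(Y_j/V^2)-\bar\na_j(Y_i/V^2)\bigr|^2$ with no leftover bulk contribution (the cancellation you flag does close up exactly), and a final tangential integration by parts on $\partial\Omega$ yields the $-\tfrac{2}{V}\na_a(Y^T)^a\langle Y,\nu\rangle$ term. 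The only cosmetic difference is the starting primitive: the paper expands $\tfrac{1}{2}\bar\Delta\bigl(V^{-1}|Y|^2\bigr)$ and collects total divergences, whereas you take the divergence of $V^{-1}\bigl(\bar\na_YY-(\bar\na_iY^i)Y\bigr)$ directly, which differs from the paper's flux by an exact divergence and lands on the same boundary integrand slightly more quickly.
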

\begin{proof}
We write $V^i$ for $\bar\na^i V$ in the proof and apply the Bochner formula to $ \frac{1}{V} |Y|^2$:
\begin{align*}
&\frac{1}{2} \bar \Delta \lt( \frac{1}{V} |Y|^2 \rt) \\
=& -\frac{\bar \Delta V	}{2V^2 } |Y|^2 + \frac{|\bar \na V|^2}{V^3} |Y|^2 + \underbrace{ \lt( -\frac{3}{2} \frac{1}{V^2} V^i \bar \na_i |Y|^2 + \frac{1}{V^2} V^i \bar \na_i Y_j Y^j \rt)}_{\mbox{I}} + \frac{1}{2V}  \bar \Delta |Y|^2.
\end{align*}
The last term can be treated as in the classical Bochner formula for one-form:
\begin{align*}
&\frac{1}{V} \bar \na^i \bar \na_j Y_i Y^j + \frac{1}{V} \bar \na^i(\bar \na_iY_j- \bar \na_j Y_i) Y^j + \frac{1}{V} \bar \na_i Y_j D^i Y^j \\
=& \frac{1}{V} \bar \na_j \bar \na^i Y_i Y^j + \frac{1}{V} R_{jk} Y^k Y^j +  \frac{1}{V} \bar \na^i(\bar \na_iY_j- \bar \na_j Y_i) Y^j + \frac{1}{V} \bar \na_i Y_j \bar \na^i Y^j\\
=& \bar \na_j  \lt (\frac{1}{V} \bar \na^i Y_i Y^j  \rt ) -  \frac{1}{V}( \bar \na^i Y_i) ^2 + \frac{1}{V^2} \bar \na^i Y_i Y^j V^j
+ \frac{1}{V} R_{jk} Y^k Y^j  \\
 & +   \bar \na^i \lt ( \frac{1}{V} (\bar \na_iY_j- \bar \na_j Y_i) Y^j \rt ) + \frac{1}{V} \bar \na_i Y_j \bar \na^j Y^i + \frac{1}{V^2} (\bar \na_iY_j- \bar \na_j Y_i)V^i Y^j .
\end{align*}
For term $\mbox{I}$, we have
\begin{align*}
\mbox{I}&= -\frac{3}{2} \bar \na_i \lt( \frac{1}{V^2} V^i |Y|^2 \rt) + \frac{3}{2} \frac{\bar \Delta V}{V^2} |Y|^2 - 3 \frac{|\bar \na V|^2}{V^3} |Y|^2 \\
&\quad + \bar \na_j \lt( \frac{1}{V^2} V^i Y_i Y^j \rt) + \frac{2}{V^3} \langle \bar \na V,Y \rangle^2 - \frac{\bar \na_i\bar \na_j V}{V^2} Y^i Y^j\\
&\quad - \frac{1}{V^2} \langle \bar \na V, Y \rangle \bar \na_j Y^j + \frac{1}{V^2} V^i (\bar \na_i Y_j - \bar \na_j Y_i) Y^j.
 \end{align*}
Collecting terms, we obtain
\begin{align*}
\frac{1}{2} \bar \Delta \lt( \frac{1}{V} |Y|^2 \rt) =& \frac{1}{V^2} \lt( \bar \Delta V g - \bar \na^2 V + V Ric \rt)(Y,Y) -\frac{1}{V} (\bar \na_i Y^i)^2  \\
& \underbrace{ - 2 \frac{|\bar \na V|^2|Y|^2}{V^3} + 2 \frac{\langle \bar \na V,Y \rangle^2}{V^3} + \frac{2}{V^2} (\bar \na_i Y_j - \bar \na_j Y_i) V^i Y^j + \frac{1}{V} \bar \na_j Y_i \bar \na^i Y^j }_{\mbox{II}} \\
& + \bar \na^i \lt( \frac{1}{V} (\bar \na_i Y_j - \bar \na_j Y_i) Y^j \rt) - \frac{3}{2} \bar \na_i \lt( \frac{1}{V^2} V^i |Y|^2 \rt) \\  &
+ \bar \na_j \lt( \frac{1}{V^2} \langle \bar \na V,Y \rangle Y^j \rt) + \bar \na_j \lt( \frac{1}{V} \bar \na_i Y^i Y^j \rt) .
\end{align*}
Making the substitution
\begin{align}\label{substitution}
\bar \na_i Y_j - \bar \na_j Y_i = \frac{2}{V} (V_i Y_j - V_j Y_i) +  V^2 \lt[ \bar \na_i \lt( \frac{Y_j}{V^2}\rt) - \bar \na_j \lt( \frac{Y_i}{V^2}\rt) \rt] 
\end{align}
and
\[ \bar\na_i Y_j = \frac{1}{2} \lt( \bar\na_i Y_j + \bar\na_j Y_i \rt) + \frac{1}{2} \lt( \bar\na_i Y_j - \bar\na_j Y_i \rt), \]
we get
\begin{align*}
\mbox{II} &= 2 \frac{|\bar \na V|^2|Y|^2}{V^3} - 2 \frac{\langle \bar \na V,Y \rangle^2}{V^3}  + 2 \lt[ \bar\na_i \lt( \frac{Y_j}{V^2}\rt) - \bar\na_j \lt( \frac{Y_i}{V^2} \rt)\rt] V^i Y^j \\
&\quad + \frac{1}{4V} |\bar\na_i Y_j + \bar\na_j Y_i|^2 - \frac{1}{4V} |\bar\na_i Y_j - \bar\na_j Y_i|^2 \\
&= \frac{1}{4V} |\bar\na_i Y_j + \bar\na_j Y_i|^2 - \frac{V^3}{4} \lt| \bar\na_i \lt( \frac{Y_j}{V^2}\rt) - \bar\na_j \lt( \frac{Y_i}{V^2}\rt) \rt|^2. 
\end{align*}
Here (\ref{substitution}) is used again in the last equality. In summary, we obtain
\begin{align*}
\frac{1}{2} \bar\Delta \lt( \frac{1}{V} |Y|^2 \rt) =&  \frac{1}{V^2} \lt( \bar \Delta V g - \bar \na^2 V + V Ric \rt)(Y,Y) -\frac{1}{V} (\bar \na_i Y^i)^2 \\
&+\frac{1}{4V} |\bar\na_i Y_j + \bar\na_j Y_i|^2 - \frac{V^3}{4} \lt| \bar\na_i \lt( \frac{Y_j}{V^2}\rt) - \bar\na_j \lt( \frac{Y_i}{V^2}\rt) \rt|^2 \\
& + \bar \na^i \lt( \frac{1}{V} (\bar \na_i Y_j - \bar \na_j Y_i) Y^j \rt) - \frac{3}{2} \bar \na_i \lt( \frac{1}{V^2} V^i |Y|^2 \rt) \\  &
+ \bar \na_j \lt( \frac{1}{V^2} \langle \bar \na V,Y \rangle Y^j \rt) + \bar \na_j \lt( \frac{1}{V} \bar \na_i Y^i Y^j \rt) .
\end{align*}
Integrating by parts, we get
\begin{align*}
&\int_{\pl\Omega} \Big [ \frac{1}{2} \frac{\pl}{\pl \nu} \lt( \frac{1}{V} |Y|^2 \rt) - \frac{1}{V} (\bar \na_i Y_j - \bar \na_j Y_i) \nu^i Y^j + \frac{3}{2V^2} \frac{\pl V}{\pl\nu} |Y|^2 \\
& \quad - \frac{1}{V^2} \langle \bar \na V, Y \rangle \langle Y,\nu \rangle  - \frac{1}{V} \bar \na_i Y^i \langle Y,\nu \rangle \Big] d A \\
=& \int_\Omega \frac{1}{V^2} \lt( \bar \Delta V g - \bar \na^2 V + V Ric \rt)(Y,Y) + \frac{1}{4V} |\bar \na_i Y_j+\bar \na_j Y_i|^2  -\frac{1}{V} (\bar \na_i Y^i)^2  \\
   & \quad - \frac{V^3}{4}\lt | \bar \na_i \lt( \frac{Y_j}{V^2}\rt) - \bar \na_j \lt( \frac{Y_i}{V^2}\rt) \rt |^2  d \Omega.
\end{align*}
Let's turn to the boundary integral.  We compute
\begin{align*}
\frac{1}{2} \frac{\pl}{\pl \nu} \lt( \frac{1}{V} |Y|^2 \rt)=-\frac{1}{2} \frac{1}{V^2} \frac{\pl V}{\pl\nu} |Y|^2 + \frac{1}{V} \langle \bar \na_Y Y,\nu \rangle + \frac{1}{V} \lt( \langle \bar \na_\nu Y,Y \rangle - \langle \bar \na_Y Y,\nu \rangle\rt), 
\end{align*}
and the boundary integral becomes
\[ \int_{\pl\Omega}\left[ \frac{1}{V} \langle \bar \na_Y Y,\nu \rangle + \frac{1}{V^2} \frac{\pl V}{\pl\nu} |Y|^2 - \frac{1}{V^2} \langle \bar \na V, Y \rangle \langle Y,\nu \rangle  - \frac{1}{V} \bar \na_i Y^i \langle Y,\nu \rangle \right] dA.\]
Decomposing $Y$ into tangential part and normal part to $\pl \Omega$ and using the identity 
\[\bar \na_i Y^i = \na_a (Y^T)^a + \langle \bar \na_\nu Y, \nu \rangle + H \langle Y,\nu \rangle\]
along $\pl \Omega$, we have
\begin{align*}
&\frac{1}{V} \langle \bar \na_Y Y, \nu \rangle  - \frac{1}{V} \bar \na_i Y^i \langle Y,\nu \rangle \\
=& \frac{1}{V} \langle \bar \na_{Y^T + \langle Y,\nu \rangle \nu} Y^T + \langle Y,\nu \rangle  \nu, \nu \rangle  - \frac{1}{V} \bar \na_i Y^i \langle Y,\nu \rangle \\
= &- \frac{1}{V} \mathbb{II}(Y^T, Y^T) + \frac{1}{V} Y^T(\langle Y,\nu \rangle) + \frac{1}{V} \langle \bar \na_\nu Y, \nu \rangle \langle Y,\nu \rangle - \frac{1}{V} \bar \na_i Y^i \langle Y,\nu \rangle \\
=& - \frac{1}{V} \mathbb{II}(Y^T, Y^T) + \frac{1}{V} (Y^T)^a \na_a \langle Y,\nu\rangle - \frac{1}{V} \na_a (Y^T)^a \langle Y,\nu \rangle - \frac{1}{V} H \langle Y,\nu \rangle^2.
\end{align*}
Integrating  by parts the term $\frac{1}{V} (Y^T)^a \na_a \langle Y,\nu\rangle $, we get 

\begin{align*}
  & \int_{\pl\Omega} \left[ \frac{1}{V} \langle \bar \na_Y Y,\nu \rangle  - \frac{1}{V} \bar \na_i Y^i \langle Y,\nu \rangle  \right ] d A \\
= & \int_{\pl\Omega} \left[ - \frac{1}{V} \mathbb{II}(Y^T, Y^T) + \frac{1}{V^2} \langle \na V, Y^T \rangle \langle Y, \nu \rangle - \frac{2}{V} \na_a (Y^T)^a \langle Y,\nu \rangle  - \frac{1}{V} H \langle Y,\nu \rangle^2 \right] d A \\
= & \int_{\pl\Omega}\Big[ - \frac{1}{V} \mathbb{II}(Y^T, Y^T) + \frac{1}{V^2} \langle  \bar \na V, Y \rangle\langle Y,\nu \rangle - \frac{1}{V^2} \frac{\pl V}{\pl\nu} \langle Y,\nu \rangle^2  -\frac{1}{V} H \langle Y,\nu \rangle^2\\
& \quad - \frac{2}{V} \na_a (Y^T)^a \langle Y,\nu \rangle \Big] dA.
\end{align*}
This finishes the proof of the Proposition.
\end{proof}
In particular, for any smooth function $f$ on $\Omega$, we apply Proposition \ref{Reilly-form} to the one-form $Y= V \bar \na f - f \bar \na V$ and derive the following:
\begin{cor} \label{Reilly-function}
Let $f$ be a function on $\Omega$ and define the one-form  $Y =  V \bar \na  f - f   \bar \na  V$. We have
\begin{equation} \label{Reilly-final}
\begin{split}
    & \int_\Omega \left[  \frac{1}{V^2} \left(\bar \Delta V g -  \bar \na ^2 V + V Ric \right)(Y,Y) + \frac{1}{V} |V \bar \na ^2 f  - f \bar \na^2 V|^2   - ( V \bar \Delta f - f \bar\Delta V )^2 \right] d \Omega \\
= & \int_{\pl\Omega} \left [- \frac{1}{V} \mathbb{II}(Y^T, Y^T)  - \frac{2}{V} \nabla_a (Y^T)^a \langle Y,\nu \rangle - \frac{1}{V} H \langle Y,\nu \rangle^2 
+ \frac{1}{V^2} \frac{\partial V}{\partial\nu} |Y^T|^2  \right]
d A.
\end{split}
\end{equation}
\end{cor}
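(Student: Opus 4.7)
\emph{Plan of proof.} My strategy is to apply Proposition~\ref{Reilly-form} to the specific one-form $Y = V\bar\na f - f\bar\na V$ and to simplify each bulk term using identities that are special to this choice. The boundary integrand in (\ref{Reilly-final}) is formally identical to the one in (\ref{integral identity}), so no further work is needed on the boundary side beyond substituting the expression for $Y$; the entire task therefore reduces to rewriting the bulk integrand in terms of $f$ and $V$.

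The first observation I would isolate is that
$$\frac{Y_j}{V^2} = \frac{f_j}{V} - \frac{f V_j}{V^2} = \bar\na_j\!\left(\frac{f}{V}\right),$$
so $Y/V^2$ is exact. This immediately gives $\bar\na_i(Y_j/V^2) - \bar\na_j(Y_i/V^2) = 0$, annihilating the last bulk term in (\ref{integral identity}). Next, I would differentiate $Y_i = Vf_i - fV_i$ and symmetrize; after the first-order cross terms $V_i f_j$ and $f_i V_j$ cancel against their transposes, the remaining contribution is
$$\bar\na_i Y_j + \bar\na_j Y_i = 2\bigl(V\bar\na^2_{ij} f - f \bar\na^2_{ij} V\bigr),$$
which converts the symmetric term in (\ref{integral identity}) into $\frac{1}{V}|V\bar\na^2 f - f\bar\na^2 V|^2$. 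Tracing the same computation produces $\bar\na_i Y^i = V\bar\Delta f - f\bar\Delta V$, accounting for the $(\bar\na_i Y^i)^2$ term. The Ricci--Hessian piece $\frac{1}{V^2}(\bar\Delta V\, g - \bar\na^2 V + V Ric)(Y,Y)$ carries over without modification, since it depends on $Y$ but not on its derivatives.

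Assembling these three simplifications with Proposition~\ref{Reilly-form} yields (\ref{Reilly-final}). The proof is essentially bookkeeping once one notices that the substitution $Y = V\bar\na f - f\bar\na V$ is chosen precisely so that $Y/V^2$ is a gradient; I do not anticipate any genuine obstacle, only the need to be careful with the factors of $V$ and with the cancellation of lower-order terms when computing $\bar\na_i Y_j + \bar\na_j Y_i$. If anything, the mildly delicate point is verifying that the exactness of $Y/V^2$ is what kills the only term in (\ref{integral identity}) that could obstruct a clean formula in terms of $f$ alone.
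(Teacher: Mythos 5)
Your proposal is correct and follows essentially the same route as the paper's own proof: the paper likewise observes that $Y/V^2=\bar\na(f/V)$ is exact (killing the antisymmetric term) and that $\bar\na_iY_j+\bar\na_jY_i=2(V\bar\na_i\bar\na_jf-f\bar\na_i\bar\na_jV)$, and then invokes Proposition~\ref{Reilly-form}. One minor remark: direct substitution of $\bar\na_iY^i=V\bar\Delta f-f\bar\Delta V$ into \eqref{integral identity} produces $-\frac{1}{V}(V\bar\Delta f-f\bar\Delta V)^2$, so the factor $\frac{1}{V}$ seems to have been dropped in the stated \eqref{Reilly-final} (harmlessly for the later application, where $V\bar\Delta f-f\bar\Delta V=0$).
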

\begin{proof}
We observe that for $Y =  V \bar \na  f - f   \bar \na  V$, 
\[  
\begin{split}
\bar \na_i \lt( \frac{Y_j}{V^2}\rt) - \bar \na_j \lt( \frac{Y_i}{V^2}\rt) =&0,\\
\bar \na_i Y_j + \bar \na_j Y_i  =&2(V\bar \na_i \bar \na_j f - f \bar \na_i \bar \na_j V).
\end{split}
\] 
The corollary follows immediately from Proposition  \ref{Reilly-form}.
\end{proof}

We apply Corollary \ref{Reilly-function} to obtain the following positivity result.
\begin{theorem} \label{Reilly-Positivity}
Suppose $(M, g)$ is a Riemannian manifold and $V$ is a smooth function such that the triple $(M,g,V)$ satisfies the null convergence condition \eqref{null}. Let $\Sigma$ be a closed connected mean convex hypersurface in $M$. Suppose $\Sigma$ bounds a domain $\Omega$ in $M$ such that $\partial  \Omega = \Sigma  \cup N$ where $N$ is contained in $ \partial M.$ For any $\tau \in C^\infty(\Sigma)$, we have
\begin{align}
\int_{\Sigma}  \frac{[ \na_a (V^2 \na^a \tau)]^2}{VH} - V^3 \mathbb{II}(\na \tau, \na\tau) + V^2 \frac{\pl V}{\pl \nu} |\na\tau|^2  d\Sigma\ge 0. \label{second variation>0}
\end{align}
\end{theorem}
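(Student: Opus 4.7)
The plan is to apply Corollary \ref{Reilly-function} to a carefully chosen test function $f$, so that (a) every bulk integrand in \eqref{Reilly-final} is manifestly nonnegative, (b) the boundary integrand on $\Sigma$ reproduces the integrand of \eqref{second variation>0} up to a nonpositive correction, and (c) the boundary contribution on $N$ vanishes.

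The key observation is the following algebraic simplification: if I write $f = Vu$ for an auxiliary function $u$, then
\[ Y = V\bar\nabla f - f\bar\nabla V = V^2 \bar\nabla u, \qquad V\bar\Delta f - f\bar\Delta V = \mathrm{div}(Y) = \mathrm{div}(V^2 \bar\nabla u). \]
I would therefore solve the degenerate elliptic boundary value problem
\[ \mathrm{div}(V^2 \bar\nabla u) = 0 \ \text{in } \Omega, \qquad u\big|_\Sigma = \tau. \]
With this choice, the three bulk integrands in \eqref{Reilly-final} are handled separately: the first is nonnegative by the null convergence assumption \eqref{null}; the second is pointwise nonnegative; and the third $-(V\bar\Delta f - f\bar\Delta V)^2$ vanishes by construction. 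Thus the left-hand side of \eqref{Reilly-final} is $\geq 0$.

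Turning to the right-hand side, on $\Sigma$ we have $Y^T = V^2 \nabla\tau$, so the $\mathbb{II}$- and $\partial V/\partial\nu$-terms in the boundary integrand become exactly $-V^3 \mathbb{II}(\nabla\tau,\nabla\tau)$ and $V^2 \frac{\partial V}{\partial\nu}|\nabla\tau|^2$. Writing $B = \nabla_a(V^2 \nabla^a \tau)$ and completing the square in $\langle Y,\nu\rangle$,
\[ -\frac{2B}{V}\langle Y,\nu\rangle - \frac{H}{V}\langle Y,\nu\rangle^2 = \frac{B^2}{VH} - \frac{H}{V}\left(\langle Y,\nu\rangle + \frac{B}{H}\right)^2, \]
where mean convexity $H>0$ makes the last term nonpositive. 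On $N \subset \partial M$, where $V$ vanishes, every integrand of \eqref{Reilly-final} contains $|Y^T|^2$, $\langle Y,\nu\rangle^2$, $\mathbb{II}(Y^T,Y^T)$, or $\nabla_a(Y^T)^a \langle Y,\nu\rangle$; since $Y=V^2\bar\nabla u$, each of these carries enough powers of $V$ to outweigh the $1/V$ and $1/V^2$ denominators, so the $N$-contribution vanishes. Assembling everything yields precisely \eqref{second variation>0}.

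The principal obstacle is the well-posedness and boundary regularity of the degenerate BVP, since $\mathrm{div}(V^2\bar\nabla\cdot)$ degenerates along $N$ where $V=0$. Existence follows from minimizing the weighted Dirichlet energy $\int_\Omega V^2|\bar\nabla u|^2$ among functions with $u|_\Sigma = \tau$, but one must ensure enough regularity of $u$ up to $N$ that Corollary \ref{Reilly-function} (stated for smooth $f$) legitimately applies and that the decay rates claimed for the $N$-boundary terms genuinely hold. This can be handled via an approximation argument, replacing $V^2$ by $V^2+\varepsilon$, applying the corollary with the resulting smooth $f_\varepsilon = V u_\varepsilon$, and taking $\varepsilon\to 0$ using the uniform weighted estimates in the spirit of \cite{LX}.
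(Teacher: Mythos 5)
Your proposal is correct and follows essentially the same route as the paper: the substitution $f=Vu$ turns your boundary value problem $\mathrm{div}(V^2\bar\nabla u)=0$, $u|_\Sigma=\tau$, into exactly the paper's problem $V\bar\Delta f - f\bar\Delta V=0$ with $f=V\tau$ on $\Sigma$ and $f=0$ on $N$ (whose solvability the paper takes directly from \cite[Lemma 2.5]{LX}), and the rest --- nonnegativity of the bulk terms via \eqref{null}, $Y^T=V^2\nabla\tau$ on $\Sigma$, and the completion of the square in $\langle Y,\nu\rangle$ using $H>0$ --- matches the paper's argument step for step. Your extra care about regularity up to the degenerate boundary $N$ is a reasonable refinement of a point the paper passes over by asserting $Y=0$ on $N$.
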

\begin{proof}
By Lemma 2.5 of \cite{LX}, the Dirichlet boundary value problem 
\begin{equation}\label{Dirichlet}
\left\{
    \begin{array}[]{rlll}
V \bar \Delta f - f \bar \Delta V &=&0&\mathrm{ in }\ \Omega,\\
        f&=& V\tau &\mathrm{ on }\  \Sigma,\\
        f&=& 0 &\mathrm{ on }\ N,\\
    \end{array}
    \right.
\end{equation}
admits a unique solution $f$.

Consider the one-form $Y=V \bar \na f - f\bar\na V$. By a direct computation, \eqref{second variation>0} is equivalent to
\begin{align*}
\int_{\Sigma} \frac{[\na_a (Y^T)^a]^2}{VH} - \frac{1}{V} \mathbb{II}(Y^T, Y^T) + \frac{1}{V^2} \frac{\pl V}{\pl \nu} |Y^T|^2 d\Sigma \ge 0,
\end{align*}
where $Y^T =V \na f  - f \na V$. 

On the other hand, $Y=0$ on $N$ and \eqref{Reilly-final} is  the same as
\begin{align*}
&\int_\Sigma \frac{[\nabla_a (Y^T)^a]^2}{VH} - \frac{1}{V} \mathbb{II}(Y^T, Y^T) + \frac{1}{V^2} \frac{\partial V}{\partial \nu} |Y^T|^2 d\Sigma \\
= & \int_\Sigma \frac{1}{V} \left( \sqrt{H} \langle Y,\nu \rangle + \frac{\nabla_a (Y^T)^a}{\sqrt{H}} \right)^2 d\Sigma  \\
&\quad + \int_\Omega \frac{1}{V^2} \left( \bar \Delta V g - \bar \na^2 V + V Ric \right)(Y,Y) + \frac{1}{V} |V\bar \na^2 f  - f \bar \na^2 V|^2 d\Omega.
\end{align*}
The assertion follows from \eqref{null}.
\end{proof}


\section{Positivity of the second variation}
In this section, we prove that a convex surface in the static slice of the reference spacetime is a local minimum of its own quasi-local energy. For this result, we assume that the isometric embedding into the static slice is  infinitesimally rigid and the reference spacetime satisfies the null convergence condition. 

\begin{definition}
An isometric embedding into the static slice is infinitesimally rigid if the kernel of the linearized isometric embedding equation consists of the restriction of the Killing vector fields of the static slice to the surface.
\end{definition}

\begin{theorem} \label{minimize_self_1}
Suppose the  reference spacetime $\mathfrak{S}$ satisfies the null convergence condition \eqref{null_conv}. Let
$X(s)=(\tau(s),X^i(s)), s\in (-\epsilon, \epsilon)$ be a family of isometric embeddings of the same metric $\sigma$ into the interior $\mathring{\mathfrak{S}}$  such that the image of $X(0)$ is a convex surface $\Sigma_0$ in the static slice, then
\[
\begin{split}
\frac{d^2}{ds^2}|_{s=0} E(\Sigma_0,X(s), \frac{\partial}{\partial t}) \ge & 0\\
\end{split}
\]
if the isometric embedding of $\Sigma_0$ into the static slice is infinitesimally rigid and  $\Sigma_0$ bounds a domain in the static slice.
\end{theorem}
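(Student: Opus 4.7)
The plan is to differentiate the first variation formula of Theorem \ref{thm_first_variation_graph} once more at $s=0$, use infinitesimal rigidity to eliminate the ``spatial'' contribution, and match the remaining $\delta\tau$-quadratic form with the Reilly-type integrand from Theorem \ref{Reilly-Positivity}. Since $\tau(0)=0$ on $\Sigma_0\subset\{t=0\}$, every factor $\rho$, $div(V^{2}\nabla\tau)$, $\nabla\tau$, and $\alpha_H-\alpha_{H_0}$ in the bracketed integrands of \eqref{first_variation_graph} vanishes at $s=0$; in particular $X_0$ is critical (reconfirming Theorem \ref{thm_own_critical}) and the second derivative depends only on \emph{first}-order variations of $V$, $\tau$, $|H_0|$, and $\alpha_{H_0}$. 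Using the definition \eqref{rho}, one computes $V\,\delta\rho|_{s=0}=\delta|H_0|$, and Proposition \ref{proposition_mean_curvature_projection} at $\tau=0$ yields $\delta|H_0|=\delta\widehat H$. Consequently,
\[
\frac{d^{2}}{ds^{2}}\Big|_{s=0}E=\frac{1}{8\pi}\int\Bigl[\delta V\,\delta\widehat H+\delta\tau\,div(V^{2}\delta\alpha_{H_0})\Bigr]\,d\Sigma.
\]

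I would then invoke the rigidity hypothesis. The linearization of the isometric embedding equation at $s=0$ collapses to $\delta\hat\sigma_{ab}=0$ because the $-V^{2}\tau_a\tau_b$ contribution is $O(s^{2})$; hence $\delta X^{i}$ is the restriction of a Killing vector field $K$ of the slice metric $g_{ij}$. A Killing deformation of $\Sigma_0$ in the slice produces a congruent family of surfaces, so the pullback of $\widehat H$ to $\Sigma_0$ is $s$-independent and $\delta\widehat H=0$. The first integrand therefore drops out. For the second, I would compute $\delta\alpha_{H_0}=\delta\alpha_{\breve e_3}-d\,\delta\theta$ via Propositions \ref{connection_reference} and \ref{total_mean_mean_gauge}. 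Linearizing \eqref{connection_reference_one} at $\tau=0$ preserves only the terms in which $\tau$ itself is differentiated, giving $(\delta\alpha_{\breve e_3})_a = V\hat h_{ab}\nabla^{b}\delta\tau-\breve e_3(V)\nabla_a\delta\tau$; linearizing $\theta$ from \eqref{gauge_angle} at $B=0$ (where $\sinh^{-1}$ behaves as the identity) yields $\delta\theta=-div(V^{2}\nabla\delta\tau)/(V|H|)$.

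Substituting these into $\frac{d^{2}}{ds^{2}}|_{s=0}E$ and integrating by parts twice — first moving the divergence off $V^{2}\delta\alpha_{H_0}$ against $\delta\tau$, then pairing the $d\delta\theta$ piece against $V^{2}\nabla\delta\tau$ — I expect the second variation to reduce to
\[
\frac{d^{2}}{ds^{2}}\Big|_{s=0}E=\frac{1}{8\pi}\int\left[\frac{[div(V^{2}\nabla\delta\tau)]^{2}}{V\widehat H}-V^{3}\hat h_{ab}\nabla^{a}\delta\tau\,\nabla^{b}\delta\tau+V^{2}\,\breve e_3(V)\,|\nabla\delta\tau|^{2}\right]d\Sigma,
\]
which is precisely the integrand of Theorem \ref{Reilly-Positivity} applied with $\tau=\delta\tau$, $\Omega$ the domain bounded by $\Sigma_0$ in the slice, $\mathbb{II}=\hat h$, $\nu=\breve e_3$, and $H=\widehat H=|H|$. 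Since $\mathfrak{S}$ satisfies the null convergence condition and $\Sigma_0$ is convex (hence mean convex) bounding a domain, Theorem \ref{Reilly-Positivity} delivers the non-negativity and finishes the proof.

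The main obstacle is the variational bookkeeping: verifying $\delta|H_0|=\delta\widehat H$ at $\tau=0$, establishing $\delta\widehat H=0$ for slice-Killing variations, and tracking signs through both integrations by parts so that the quadratic form lands exactly on the Reilly integrand with no stray remainder. Conceptually, the rigidity hypothesis is used precisely to discard the potentially uncontrolled spatial term $\delta V\cdot\delta\widehat H$, after which positivity of the surviving purely $\tau$-type quadratic form is handed off to the Reilly identity proved in the previous section.
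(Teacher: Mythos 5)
Your proposal is correct and follows essentially the same route as the paper: reduce the second variation to $-\frac{1}{8\pi}\int(\delta\alpha_{H_0})(V^2\nabla\delta\tau)\,d\Sigma_0$, compute $\delta\alpha_{H_0}$ by linearizing \eqref{connection_reference_one}, \eqref{gauge_angle} and \eqref{gauge_change} at $\tau=0$, integrate by parts, and invoke Theorem \ref{Reilly-Positivity}. The only difference is cosmetic: where you discard the cross term by arguing directly that $\delta\widehat H=0$ for a slice-Killing variation (the decomposition $\frac{1}{8\pi}\int[\delta V\,\delta\widehat H+\delta\tau\,div(V^2\delta\alpha_{H_0})]\,d\Sigma$ is exactly the one the paper uses in Theorem \ref{minimize_self_2}), the paper instead composes $X(s)$ with a family of ambient isometries $A(s)$ so that the spatial first-order variation, and hence $\delta V$ and $\delta|H_0|$, vanish identically.
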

\begin{proof}
Let $H_0(X(s))$ and $\alpha_{H_0}(X(s))$ be the mean curvature vector and the connection one-form in mean curvature gauge of the image of $X(s)$. For simplicity, set $\delta |H_0| = \frac{d}{ds}|_{s=0} |H_0(X(s))|$ and
$\delta \alpha_{H_0} =\frac{d}{ds}|_{s=0} \alpha_{H_0}(X(s))$. Let $\widehat X(s)=(0,X^i(s))$ be the projection of $X(s)(\Sigma)$ onto the static slice. $\widehat X(s)$ is an isometric embedding of the metric 
\[  \hat \sigma(s)_{ab}= \sigma_{ab} +  V ^2(s) \tau_a(s) \tau_b(s) \]
into the static slice and $ \delta \hat \sigma = \frac{d}{ds}|_{s=0} \hat \sigma(s)=0$, as $\tau(0)=0$.

From the infinitesimal rigidity of the isometric embeddings into the static slice,  there is a family of isometries $\hat A(s)$ of the static slice with $\hat A(0)=Id$  such that 
\[
   \delta \hat A =  \delta  \widehat X
\]
along the surface $\Sigma_0$. Here we set $ \delta \hat A = \frac{d}{ds}|_{s=0} \hat A (s)$  and $\delta \widehat X = \frac{d}{ds}|_{s=0} \widehat X (s) $. Moreover, there is a family $A(s)$ of isometries of the reference spacetime whose restriction to the static slice is  the family $\hat A (s)$. Consider the following family of isometric embeddings of $\sigma$ into the reference spacetime:
\[  \breve X(s) = A^{-1}(s) X(s). \]
 Suppose $\breve X (s) = (\breve \tau(s) , \breve X^i(s))$ in the fixed static coordinate, we have \begin{equation} \label{match_first} \frac{d}{ds}|_{s=0} \breve X^i (s)= 0.\end{equation}

We claim that 
\begin{equation}\label{equality_second_variation}
\frac{d^2}{ds^2}|_{s=0} E(\Sigma_0,X(s), \frac{\partial}{\partial t}) = \frac{d^2}{ds^2}|_{s=0} E(\Sigma_0,\breve X(s), \frac{\partial}{\partial t}).
\end{equation}

Let $H_0(\breve X(s))$ and $\alpha_{H_0}(\breve X(s))$ be the the mean curvature vector and the connection one-form in mean curvature gauge of the images of  $\breve X(s)$. 
\begin{equation}\label{global_isometry_invariant}
\begin{split}
|H_0(X(s))| = &  |H_0(\breve X(s))| \\
\alpha_{H_0} (X(s))=&\alpha_{H_0} (\breve X(s))
\end{split}
\end{equation}
since both are invariant under isometries of the reference spacetime.  By \eqref{match_first}, is easy to see that 
\begin{equation} \label{variation_mean_vanish}  \frac{d}{ds}|_{s=0} | \breve H_0(s)| = 0.   \end{equation}

Moreover, while $\breve \tau(s)$ is different from $\tau(s)$, we have 
\begin{equation}\label{time_function_same} \frac{d}{ds}|_{s=0} \breve \tau(s) = \frac{d}{ds}|_{s=0} \tau(s)  =f \end{equation}
since $\tau(0) =0$, $A(0)= Id$ and the static slice is invariant under the action of $A(s)$.  

We apply Theorem  \ref{thm_first_variation_graph}  to each of $X(s)(\Sigma)$ and $\breve X(s)(\Sigma)$ and use \eqref{global_isometry_invariant}, \eqref{variation_mean_vanish} and \eqref{time_function_same} to differentiate
\eqref{first_variation_graph} one more time.  Only the derivative of the term $ \frac{1}{8 \pi} \int_{\Sigma} (\delta \tau)   div (  V ^2 \alpha_{H_0} )  d \Sigma $ survives after the evaluation at $s=0$.  We thus  conclude that both sides of \eqref{equality_second_variation} are the same as
\[ 
   - \frac{1}{8 \pi} \int  ( \delta \alpha_{H_0} )( V  ^2 \nabla f) d\Sigma_0 . 
\]
Differentiating  \eqref{connection_reference_one}, \eqref{gauge_angle} and \eqref{gauge_change} with respect to $s$, we conclude that
\[ (\delta \alpha_{H_0})_a = \nabla_a \left( \frac{ div (  V ^2 \nabla f)}{ V  |H_0|}\right)+V  h_{ab} \nabla^b f  - f_a e_3( V ).\]
As a result,
\[
\begin{split}
    & - \int  ( \delta \alpha_{H_0} )( V  ^2 \nabla f) d\Sigma_0 \\
= & - \int    V ^2 f^a [\nabla_a \left( \frac{ div (  V ^2 \nabla f)}{ V  |H_0|}\right) +  V  h_{ab} \nabla^b f  - f_a e_3( V ) ]  d\Sigma_0\\
= & \int \left \{ \frac{[div( V ^2 \nabla f)]^2}{|H_0|  V } -  V ^3 h^{ab} f_af_b +  V ^2 |\nabla f|^2 e_3 ( V ) \right \} d\Sigma_0.
\end{split}
\]
The theorem follows from Theorem \ref{Reilly-Positivity}.
\end{proof}
In \cite[Theorem 4]{Li-Wang}, it is proved that in a spherically symmetric $3$-manifold with metric
\[ g= \frac{1}{f^2(r)}dr^2+r^2dS^2, \]
the sphere of symmetry $r=c$ is not infinitesimally rigid unless $g$ is a space form. From the symmetry, it is easy to see that the sphere of symmetry  is of constant mean curvature (CMC). In the following theorem, we prove that the conclusion for Theorem \ref{minimize_self_1} still holds for the sphere of symmetry if it is a stable CMC surface.
\begin{definition}
A CMC surface $\Sigma$ is stable if 
\begin{equation}
\int \lt [|\nabla f|^2 - (|h|^2 + Ric(\nu,\nu))f^2\rt ] d\Sigma \ge 0
\end{equation}
for all functions $f$ on $\Sigma$ such that  
\[
\int f d\Sigma=0.
\]
Here $h$ denote the second fundamental form of the surface.
\end{definition}
\begin{theorem} \label{minimize_self_2} 
Suppose the reference spacetime $\mathfrak{S}$ satisfies the null convergence condition \eqref{null_conv}, and the static slice is spherically symmetric (with a spherically symmetric static potential). Let $X(s)=(\tau(s),X^i(s)), s\in (-\epsilon, \epsilon)$ be a family of isometric embeddings of the same metric $\sigma$ into the interior $\mathring{\mathfrak{S}}$  such that $\Sigma_0=X(0)$ is a sphere of symmetry in the static slice. Then
\[
\begin{split}
\frac{d^2}{ds^2}|_{s=0} E(\Sigma_0,X(s), \frac{\partial}{\partial t}) \ge & 0\\
\end{split}
\]
if $\Sigma_0$ is a stable CMC surface and $\nu(V) \ge 0$.
\end{theorem}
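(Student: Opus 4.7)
The plan is to follow the proof of Theorem \ref{minimize_self_1} as closely as possible, using the rotational isometries of the spherically symmetric static slice in place of infinitesimal rigidity, and absorbing the resulting residual variation via the stable CMC hypothesis together with the assumption $\nu(V)\ge 0$. First I would pick a family $A(s)$ of rotations of the static slice with $A(0)=\mathrm{Id}$ such that $\delta A|_{\Sigma_0}$ matches the projection of $\delta \widehat X$ onto the three-dimensional space of restrictions of rotational Killing fields to $\Sigma_0$. Setting $\breve X(s) = A^{-1}(s) X(s)$, the invariance argument establishing \eqref{equality_second_variation} still applies. The residual $\delta\widehat{\breve X} = \beta \nu + P^a \partial_a \widehat X$ at $\Sigma_0$ need not vanish, but tracing the first-order isometric embedding constraint $\delta\hat\sigma = 0$ against $\hat\sigma^{ab}$ yields $H_0 \beta + div(P) = 0$, and integrating over the closed $\Sigma_0$ (with $H_0$ a positive constant) gives the mean-zero condition $\int_{\Sigma_0} \beta \, d\Sigma_0 = 0$ required for the stable CMC inequality.

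Next, I would differentiate \eqref{first_variation_graph} once more at $s=0$. Because $\tau$, $\nabla\tau$, $\rho$, $\alpha_H - \alpha_{H_0}$, and $div(V^2 \nabla \tau)$ all vanish at $s=0$, almost every term drops out and only
\begin{equation*}
\frac{d^2}{ds^2}\Big|_{s=0} E(\Sigma_0, \breve X(s), \partial_t) = \frac{1}{8\pi}\int_{\Sigma_0} V'(0)\, \delta |H_0| \, d\Sigma_0 - \frac{1}{8\pi}\int_{\Sigma_0} (\delta \alpha_{H_0})(V^2 \nabla f) \, d\Sigma_0
\end{equation*}
survives, where $f = \delta \tau$ and, because $V$ is constant on the sphere of symmetry, $V'(0) = \beta\, \nu(V)$.

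Direct inspection of \eqref{connection_reference_one}, \eqref{gauge_angle}, and \eqref{gauge_change} at $\tau=0$ shows that every candidate $\beta$-contribution to $\delta\alpha_{H_0}$ is multiplied by a factor vanishing when $\tau=0$, so $\delta\alpha_{H_0}$ is given by precisely the formula derived in the proof of Theorem \ref{minimize_self_1}. The second integral above therefore equals
\begin{equation*}
\int_{\Sigma_0} \Big[ \frac{[div(V^2 \nabla f)]^2}{V |H_0|} - V^3 h^{ab} f_a f_b + V^2 |\nabla f|^2 \, e_3(V) \Big] \, d\Sigma_0 \ge 0
\end{equation*}
by Theorem \ref{Reilly-Positivity}. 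For the first integral, \eqref{relation_mean_curvature} together with \eqref{gauge_change} at $\tau=0$ gives $\delta|H_0| = \delta \widehat H$, and since $\widehat X(s)$ realizes a first-order isometric deformation of $\Sigma_0$ in the static slice with velocity $\beta\nu + P^a \partial_a \widehat X$, the CMC property $\widehat\nabla\widehat H = 0$ kills the tangential term in \eqref{second_variation_metric_1} and leaves the Jacobi identity $\delta|H_0| = -\widehat\Delta \beta - (|\widehat h|^2 + Ric(\nu,\nu))\beta$. Integration by parts, using that $\nu(V)$ is constant on $\Sigma_0$, then yields
\begin{equation*}
\int_{\Sigma_0} V'(0)\, \delta |H_0| \, d\Sigma_0 = \nu(V) \int_{\Sigma_0} \Big[ |\nabla \beta|^2 - (|\widehat h|^2 + Ric(\nu,\nu))\beta^2 \Big] \, d\Sigma_0 \ge 0,
\end{equation*}
by the stable CMC hypothesis applied to $\beta$ (which satisfies $\int \beta = 0$) and the assumption $\nu(V) \ge 0$. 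Adding the two non-negative contributions gives $\frac{d^2}{ds^2}|_{s=0} E \ge 0$.

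The main delicate point will be verifying the two decoupling claims that keep these integrals clean: that $\delta\alpha_{H_0}$ at $s=0$ is unaffected by the residual spatial variation $\beta\nu + P^a\partial_a \widehat X$, and that $\delta|H_0|$ at $s=0$ is unaffected by the time variation $f$. Both rely on the pointwise vanishing of $\tau$, $\nabla\tau$, and $\alpha_{\breve e_3}$ at $s=0$, which eliminates the would-be cross-terms, together with the sphere-of-symmetry facts that $V$, $\nu(V)$, and $|H_0|$ are constant along $\Sigma_0$.
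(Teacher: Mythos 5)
Your proposal is correct and follows essentially the same route as the paper: the second variation splits into the $\int (\delta\alpha_{H_0})(V^2\nabla f)$ term handled by the Reilly-type positivity (Theorem \ref{Reilly-Positivity}) and the $\int \delta V\,\delta|H_0|$ term handled by the stable CMC inequality, with $\int\beta\,d\Sigma_0=0$ extracted from the trace of $\delta\hat\sigma=0$ and the constancy of $V$, $\nu(V)$, $H_0$ on the sphere of symmetry. The only difference is your preliminary modding-out by rotations, which the paper omits as unnecessary (the rotational Killing fields are tangent to $\Sigma_0$, so this step changes only $P^a$, which already drops out via $P^a\hat\nabla_a\widehat H=0$).
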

\begin{proof}
Let $H_0(X(s))$ and $\alpha_{H_0}(X(s))$ be the mean curvature vector and the connection one-form in mean curvature gauge of the image of $X(s)$. For simplicity, set $\delta |H_0| = \frac{d}{ds}|_{s=0} |H_0(X(s))|$ and
$\delta \alpha_{H_0} =\frac{d}{ds}|_{s=0} \alpha_{H_0}(X(s))$. Let $\widehat X(s)=(0,X^i(s))$ be the projection of $X(s)(\Sigma)$ onto the static slice. $\widehat X(s)$ is an isometric embedding of the metric 
\[  \hat \sigma(s)_{ab}= \sigma_{ab} +  V ^2(s) \tau_a(s) \tau_b(s) \]
into the static slice and $ \delta \hat \sigma = \frac{d}{ds}|_{s=0} \hat \sigma(s)=0$, as $\tau(0)=0$. Finally, let $\widehat H(s)$ be the mean curvature of $\widehat X(s)$ in the static slice.

We apply Theorem  \ref{thm_first_variation_graph}  to each of $X(s)(\Sigma)$ and conclude that 
\[ 
 \frac{d^2}{ds^2}|_{s=0} E(\Sigma_0,X(s), \frac{\partial}{\partial t}) =  - \frac{1}{8 \pi} \int  ( \delta \alpha_{H_0} )( V  ^2 \nabla f) d\Sigma_0 +\frac{1}{8 \pi} \int \delta V \delta |H_0|  d\Sigma_0. 
\]
The first integral is non-negative as in the proof of the Theorem \ref{minimize_self_1}. For the second integral, we observe that 
\[ \delta |H_0| = \delta \widehat H. \]
We decompose $\delta \widehat X$ into tangential and normal parts to $\Sigma_0$. Let
\[ \delta \widehat X = P^a \frac{\partial \widehat X}{\partial v^a} + \beta \nu.  \]
The components $\beta$ and $P^a$ satisfy
\begin{equation}\label{gauge_first_variation_metric}
2 \beta h_{ab} + \nabla_a  P_b +  \nabla_b  P_a=0
\end{equation}
since  $\delta \hat \sigma=0$.

Taking the trace  of \eqref{gauge_first_variation_metric} and integrating, we conclude that 
\[  \int \beta H_0 d\Sigma_0 = 0. \] In particular, $\int \beta d\Sigma = 0$ since $H_0$ is a constant. 
Since $V$, $H_0$ and $\nu(V)$ are constants on $\Sigma_0$, integrating over $\Sigma_0$ gives
\[
 \int \delta V \delta \widehat H  d\Sigma_0 =  \nu(V) \int   \beta (-\Delta \beta - (|h|^2+Ric(\nu,\nu))\beta) d \Sigma_0.
\]
Integrating by parts, we see that the right hand side is non-negative if $\Sigma_0$ is a stable CMC surface and $\nu(V) $ is non-negative.
\end{proof}
\begin{remark}
For a static spacetime with metric
\[ \check{g}=  -V^2(r)dt^2+\frac{1}{V^2(r)}dr^2+r^2dS^2, \]
the null convergence condition and the stable CMC condition can be expressed explicitly in terms of $V(r)$ and its derivatives. See \cite{Brendle,WWZ}.
\end{remark}

\end{document}